\documentclass[reqno]{amsart}
\usepackage[english]{babel}
\usepackage{amssymb,verbatim}
\usepackage[T1]{fontenc}
\hyphenation{pluri-sub-har-mon-ic}
\hyphenation{sub-har-mon-ic}
\hyphenation{ho-lo-mor-phic}
\hyphenation{anti-ho-lo-mor-phic}
\hyphenation{pluri-potential}
\newtheorem{theorem}{Theorem}[section]
\newtheorem{proposition}[theorem]{Proposition}
\newtheorem{lemma}[theorem]{Lemma}
\newtheorem{corollary}[theorem]{Corollary}
\theoremstyle{definition}
\newtheorem{definition}[theorem]{Definition}
\newtheorem{example}[theorem]{Example}
\theoremstyle{remark}
\newtheorem*{remark}{Remark}
\numberwithin{equation}{section}
\newcommand{\RE}{\mbox{$\mathbb{R}$}}
\newcommand{\C}{\mbox{$\mathbb{C}$}}

\begin{document}

\title{The classification of holomorphic $(m,n)$--subharmonic morphisms}

\author{Per \AA hag}\address{Department of Mathematics and Mathematical Statistics\\ Ume\aa \
University\\SE-901~87 Ume\aa, Sweden \\ Sweden}\email{Per.Ahag@math.umu.se}

\author{Rafa\l\ Czy{\.z}}\address{Faculty of Mathematics and Computer Science, Jagiellonian
University,
\L ojasiewicza~6, 30-348 Krak\'ow, Poland}
\email{Rafal.Czyz@im.uj.edu.pl}

\author{Lisa Hed}\address{Department of Mathematics and Mathematical Statistics\\ Ume\aa \
University\\SE-901~87 Ume\aa, Sweden \\ Sweden}\email{Lisa.Hed@math.umu.se}

\keywords{Caffarelli-Nirenberg-Spruck model, harmonic morphisms, $m$-subharmonic functions,
holomorphic maps, plurisubharmonic functions, subharmonic functions}
\subjclass[2010]{Primary 58E20, 32A10, 32U05; Secondary 31C45, 15A18.}

\begin{abstract}
We study the problem of classifying the holomorphic $(m,n)$-$\!$ subharmonic morphisms in complex
space. This determines which holomorphic mappings preserves $m$-subharmonicity in the sense that
the composition of the holomorphic mapping with a $m$-subharmonic functions is $n$-subharmonic.
We show that there are three different scenarios depending on the underlying dimensions, and the
model itself. Either the holomorphic mappings are just the constant functions, or up to
composition with a homotethetic map, canonical orthogonal projections. Finally, there is a more
intriguing case when subharmonicity is gained in the sense of the Caffarelli-Nirenberg-Spruck
framework.
\end{abstract}

\maketitle

\section{Introduction}

 Let $X,Y$, and $K$ be suitable spaces (that shall be discussed later). A continuous function
 $f:X\to Y$ is said to be
 a \emph{harmonic morphism} if, for every open set $V\subseteq Y$ with $f^{-1}(V)\neq \emptyset$,
 and for every harmonic function $h:V\to K$, the function $(h\circ f):f^{-1}(V)\to K$ is a harmonic
 function. In 1965, Constantinescu and Cornea~\cite{ConstantinescuCornea} made the first general
 study of harmonic morphisms in the general potential-theoretical  setting of continuous maps
 between harmonic spaces in the sense of Brelot (see also~\cite{Laine,Meghea}). Even so, the idea
 of harmonic morphisms is  more than a century older than the publication
 of~\cite{ConstantinescuCornea}. An early encounter of the idea of harmonic morphism is from 1848;
 an article written by Charles Gustave Jacob Jacobi~\cite{jacobi}. He considered the case when
 $X=\RE^3$, $Y=\C$, $K=\C$, and the harmonic functions $h:V\to\C$ were holomorphic functions. Many
 consider this the start of the subject of harmonic morphisms (see
 e.g.~\cite{BairdWood,Fuglede2000}). In the late seventies Fuglede~\cite{Fuglede1978} and
 Ishihara~\cite{Ishihara}, independently, characterized the harmonic morphisms in the case of
 Riemannian manifolds, i.e., $X=M$, $Y=N$ are Riemannian manifolds and $K=\RE$. Ever since the
 publication of those articles the subject of harmonic morphisms, and its generalization, have been
 considerably studied and used. A strong indication  of this is the mighty Bibliography of Harmonic
 Morphisms~\cite{GudmundssonBiblio} by Gudmundsson. We would like to mention the connection with
 Brownian motions, and stochastic processes~\cite{BanuelosOksendal, BernardCampbellDavie,
 CsinkOksendal1, CsinkOksendal2, CsinkFitzsimmonsOksendal,Darling, Levy}, the extension to
 nonlinear potential theory~\cite{HeinonenKilpelainenMartio}, and the consideration of
 pseudoharmonic morphisms~\cite{BarlettaDragomirUrakawa}. Furthermore, the applications to:
 potential theory~\cite{Fuglede2011}, minimal submanifolds~\cite{BairdGudmundsson}, and to physical
 gravity~\cite{mustafa}. For further information about harmonic morphisms, and its generalizations,
 we highly recommend the monograph~\cite{BairdWood} written by Baird and Wood.

In this article we shall consider complex spaces, i.e., $X=\C^N$, $Y=\C^M$, and our morphisms shall
be within the
Caffarelli-Nirenberg-Spruck model. But before we state our main result (Theorem~C), let us give a
thorough background on the motivation behind this paper.

 Assume that $\Omega$ and $\Omega'$ are two connected, and open subsets of $\mathbb C$, and let
 $f:\Omega\to\Omega'$ be a function. Then it is a classical result that the following conditions
 are equivalent:

\medskip

\begin{enumerate}\itemsep2mm

\item \emph{for every subharmonic function $\varphi:\Omega'\to\RE\cup\{-\infty\}$, the function
    $(\varphi\circ f):\Omega\to\RE\cup\{-\infty\}$ is subharmonic;}

\item \emph{for every harmonic function $h:\Omega'\to\RE$, the function $(h\circ f):\Omega\to\RE$
    is harmonic;}

\item \emph{$f$ is holomorphic or $\bar f$ is holomorphic.}

\end{enumerate}

\medskip

\noindent For a proof see e.g.~\cite{K1, K}. Thus, we can characterize those functions
$f:\Omega\to\Omega'$ such that condition (1), or (2), holds. Even though the work of
Jacobi~\cite{jacobi} is certainly an early encounter with harmonic morphisms as we know them today,
the geometrical idea behind the equivalence of (2) and (3) is due to Johann Carl Friedrich
Gau{\ss}~\cite{Gauss1a} from 1822 (published in~\cite{Gauss1b}; for an English translation
see~\cite{Gauss2, Gauss3}). In modern times, the above result was used for example in analytic
multi-valued function theory (see e.g.~\cite{K1} and the references therein).

 It is possible to generalize the above equivalences to higher dimensions. This will be discussed
 here in three directions. First, within several complex variables and pluripotential theory
 (Theorem~A), and then later within potential theory (Theorem~B). Finally, we shall prove a
 generalization to the Caffarelli-Nirenberg-Spruck model (Theorem~C). The first generalization
 states that:

\medskip

\noindent \textbf{Theorem~A.} \emph{Let $f=(f_1,\ldots,f_M):\Omega_N\to\Omega_M$ be a function,
where $\Omega_N\subseteq\mathbb{C}^N$ and $\Omega_M\subseteq\mathbb{C}^M$ are connected, and open
sets with $N,M\geq 1$. Then the following assertions are equivalent: }
\begin{enumerate}\itemsep2mm

\item \emph{for every plurisubharmonic function $\varphi:\Omega_M\to\RE\cup\{-\infty\}$, the
    function $(\varphi\circ f):\Omega_N\to\RE\cup\{-\infty\}$ is plurisubharmonic;}

\item \emph{for every pluriharmonic function $h:\Omega_M\to\RE$, the function $(h\circ
    f):\Omega_N\to\RE$ is pluriharmonic;}

\item \emph{$f$ is holomorphic or $\bar f$ is holomorphic.}

\end{enumerate}

\medskip

\noindent That an upper semicontinuous function $\varphi:\Omega_M\to\RE\cup\{-\infty\}$ is defined
to be a plurisubharmonic function if $(\varphi\circ f):\Omega_N\to\RE\cup\{-\infty\}$ is a
plurisubharmonic function, for every holomorphic mapping $f:\Omega_N\to\Omega_M$, goes back to the
legendary work of Oka~\cite{oka}, and Lelong~\cite{lelong}, in 1942. In fact, it is also common
knowledge that we only have to consider $\C$-linear isomorphisms $f:\Omega_N\to\Omega_M$ (see
e.g.~\cite{K}). It should
be noted that in condition (2) of Theorem~A we have to change harmonic functions to the
pluriharmonic functions, since they are a natural counterpart to harmonic functions in
pluripotential theory. We strongly believe Theorem~A is folklore among experts in several-variable
complex analysis, and pluripotential theory, but we could not find an elementary
proof in the literature. Therefore we have included one in Section~\ref{sec_firstpart}. For the
case of complex manifolds
see~\cite{Loubeau1,Loubeau2}.

Next, we look at Gudmundsson-Sigurdsson's  potential-theoretical generalization
from~\cite{gudmundssonsigurdsson}:

\medskip

\noindent \textbf{Theorem~B.} \emph{Let $f=(f_1,\ldots,f_M):\Omega_N\to\Omega_M$ be a holomorphic
mapping, where $\Omega_N\subseteq\mathbb{C}^N$, $N\geq 1$, and $\Omega_M\subseteq\mathbb{C}^M$,
$M\geq 1$, are connected and open sets. Then the following assertions are equivalent: }
\begin{enumerate}\itemsep2mm

\item \emph{for every subharmonic function $\varphi:\Omega_M\to\RE\cup\{-\infty\}$, the function
    $(\varphi\circ f):\Omega_N\to\RE\cup\{-\infty\}$ is subharmonic;}

\item \emph{for every harmonic function $h:\Omega_M\to\RE$, the function $(h\circ
    f):\Omega_N\to\RE$ is harmonic;}

\item \emph{$f$ has the following form:}

\smallskip

\begin{itemize}\itemsep2mm
\item[$(a)$] \emph{if $M\leq N$, then $f$ is constant or $f$ is , up to the composition with
a homotethetic map, the canonical orthogonal projection}
\[
\mathbb{C}^N= \mathbb{C}^M\times \mathbb{C}^{N-M}\to \mathbb{C}^M\, ,
\]
\emph{i.e., $f$ can be represented as}
\[
f(z)=cAz+w_0\, ,
\]
\emph{where $c\in\RE$,  $w_0\in\C^M$, and $A$ is a matrix as described in~(\ref{def_MxNunitary});}

\item[$(b)$] \emph{if $M>N$, then $f$ is constant.}

\end{itemize}

\end{enumerate}

\medskip

\noindent Theorem~B is quite different from Theorem~A. Note that we need to assume that
$f:\Omega_N\to\Omega_M$ is a holomorphic map. Otherwise the theorem, as stated, is not true (see
e.g. Example 1 on p.~114 in~\cite{Fuglede1978}, or the example on p.~297
in~\cite{gudmundssonsigurdsson}). The equivalence between (1) and (2) is due to Constantinescu and
Cornea (Corollary~3.2 in~\cite{ConstantinescuCornea}). Theorem~B in the case $M=1$ was observed by
Fuglede in~\cite{Fuglede1978}, as well as that condition (2) implies condition $(3b)$.  In 1980,
Baird and Eells generalized Fuglede's result for $M=1$ to the case when $\Omega_N$ is a K\"ahler
manifold and $\Omega_M$ is a Riemannian surface~\cite{BairdElles}. Alternative proofs of Theorem~B
can be found in
 Fu~\cite{Fu}, and Svensson~\cite{svensson1}. For further development in connection with Theorem~B
 see e.g.~\cite{Gudmundsson1, Gudmundsson2, Ou1, Ou2, svensson1, svensson2}. An influential article
 we would like to mention is~\cite{siu} from 1980 written by Siu. The correspondent result of
 Theorem~B in Euclidean spaces was proved by Fu~\cite{Fu} (see also~\cite{Sattayatham}).

\medskip

Next, we shall consider the generalization to the Caffarelli-Nirenberg-Spruck model. This model have
its origin in the article~\cite{CNS} by Caffarelli~\emph{et al.} from 1985. As indicated above,
harmonic morphisms gives a method of constructing minimal submanifolds. This should be compared to the authors of~\cite{CNS}
who provided a method of constructing special Lagrangian submanifolds, which are volume-minimizing
submanifolds introduced as an example within calibrated geometry~\cite{HarveyLawson}. In~\cite{B,Li}, the Caffarelli-Nirenberg-Spruck model
was adapted to a setting in complex space, and this attracted considerable attention. We would like to draw attention
to~\cite{DinewKolodziej, DinewLu, Lu, Cuong, Phong, WanWang}.

Let us now give a brief introduction to the Caffarelli-Nirenberg-Spruck model, and refer the reader
to Section~\ref{sec_CNS} for further details. Let $\Omega_N\subseteq\C^N$ be a connected, and open
set, and $1\leq k\leq N$. By $\mathbb C_{(1,1)}$ we denote the space of $(1,1)$-forms with constant
coefficients, and then we define
\[
\Gamma_k=\left\{\alpha\in \mathbb C_{(1,1)}: \alpha\wedge \beta^{N-1}\geq 0, \ldots ,
\alpha^k\wedge \beta ^{N-k}\geq 0   \right\},
\]
where $\beta=dd^c|z|^2$ is the canonical K\"{a}hler form on $\mathbb C^N$. Then, following
Caffarelli~\emph{et al.} we say that a subharmonic
function $u$ defined on $\Omega_N$ is called \emph{$k$-subharmonic} if
\[
dd^cu\wedge\alpha_1\wedge\dots\wedge\alpha_{k-1}\wedge\beta^{N-k}\geq 0
\]
holds in the sense of currents for all $\alpha_1,\ldots,\alpha_{k-1}\in \Gamma_k$. We denote the
set of all $k$-subharmonic functions defined on $\Omega_N$ by $\mathcal{SH}_k(\Omega_N)$.
Furthermore, we say that a function $u$ is \emph{$k$-pluriharmonic} on  $\Omega_N$, if $u$ and $-u$
are
$k$-subharmonic. The reason why we do not name these functions $k$-harmonic is to avoid confusion
with the notion of $p$-harmonic functions defined in nonlinear potential theory. Since $N$ is the
complex dimension of $\Omega_N$ we have that
\[
\mathcal{PSH}(\Omega_N)=\mathcal{SH}_N(\Omega_N)\subset\cdots\subset
\mathcal{SH}_1(\Omega_N)=\mathcal{SH}(\Omega_N)\, ,
\]
where $\mathcal{PSH}(\Omega_N)$ denotes the set of plurisubharmonic functions defined on
$\Omega_N$, and $\mathcal{SH}(\Omega_N)$ is the set of subharmonic functions defined on $\Omega_N$.
A typical example of a function in $\mathcal{SH}_k(\Omega_N)$, $1\leq k\leq N$, is
\[
G_k(z)=
\begin{cases}
-\|z\|^{2-\frac{2N}{k}} & \text{ if } k<N\\[2mm]
\log\|z\| & \text{ if } k=N.
\end{cases}
\]

\medskip

The third and final generalization is our classification of the holomorphic $(m,n)$-subharmonic
morphisms:

\medskip

\noindent \textbf{Theorem~C.} \emph{Set $M,N>1$, and let $1\leq m< M$, $1\leq n\leq N$, with
$m\leq n$. Also, let $\Omega_N\subseteq \mathbb C^N$ and $\Omega_M\subseteq \mathbb C^M$ be
connected and open sets, and assume that $f=(f_1,\ldots,f_M):\Omega_N\to \Omega_M$ is a holomorphic
mapping. Then the following two assertions are equivalent:}

\smallskip

\begin{enumerate}\itemsep2mm

\item  \emph{for every $m$-subharmonic function $\varphi:\Omega_M\to\RE\cup\{-\infty\}$, the
    function $(\varphi\circ f):\Omega_N\to\RE\cup\{-\infty\}$ is $n$-subharmonic, i.e. $f$ is a $(m,n)$-subharmonic morphism};

\item \emph{$f$ has the following form:}

\smallskip

\begin{itemize}\itemsep2mm
\item[$(a)$] \emph{if $M\leq N$ and $m=n$, then $f$ is constant or $f$ is, up to the
    composition with
a homotethetic map, the canonical orthogonal projection}
\[
\mathbb{C}^N= \mathbb{C}^M\times \mathbb{C}^{N-M}\to \mathbb{C}^M\, ,
\]
\emph{i.e., $f$ can be represented as}
\[
f(z)=cAz+w_0\, ,
\]
\emph{where $c\in\RE$,  $w_0\in\C^M$, and $A$ is a matrix as described in~(\ref{def_MxNunitary});}

\item[$(b)$] \emph{if $M\leq N$ and $m<n$, then $f$ is constant;}

\item[$(c)$] \emph{if $M>N$ and $m\leq n$, then $f$ is constant.}

\end{itemize}

\end{enumerate}

\medskip

If $m=n=1$, then condition (1) in Theorem~C is the same as condition (1) in Theorem~B.
Note that in Theorem~C we do \emph{not}
have any condition that states: for every  $m$-pluriharmonic function $h:\Omega_M\to\RE$, the
function $(h\circ f):\Omega_N\to\RE$
is $n$-pluriharmonic. This is in general impossible, since the set of $k$-subharmonic
functions, $2\leq k\leq N$,  defined on a connected, and open set $\Omega_N\subseteq\C^N$ is equal
to the set of all pluriharmonic functions defined on $\Omega_N$ (Proposition~\ref{prop_kPH}). Thus,
within the Caffarelli-Nirenberg-Spruck model there are not sufficiently many $k$-pluriharmonic functions, $2\leq k\leq N$.

 Finally, in Section~\ref{sec_open} we shall show that the case $m>n$ is more intricate, even when
 $m=2$, $n=1$. This is shown in Example~\ref{Finally_ex}, where we construct linear holomorphic
 mappings $f,g:\mathbb C^3\to \mathbb C^3$ with the property that $v\circ f\in \mathcal
 {SH}_1\left(\mathbb{C}^3\right)$ for every $v\in \mathcal {SH}_2\left(\mathbb{C}^3\right)$, but
 there is a function $u\in \mathcal{SH}_2\left(\mathbb{C}^3\right)$ such that $u\circ g\notin
 \mathcal {SH}_1\left(\mathbb{C}^3\right)$. We end this article with a necessary condition for a
 holomorphic mapping $f$ to
 satisfy condition (1) in Theorem~C, in the case when $m=2$, $n=1$ (Theorem~\ref{cor1}).

\medskip

 Our interest in Theorem~C did not begin with the interest of morphisms in  the
 Caffarelli-Nirenberg-Spruck model, but rather an attempt of finding a new way to construct
 $k$-subharmonic functions. We shall say a few words about this. Let $\Omega_N\subseteq\C^N$ be an
 open set, and let $\varphi:\Omega_{N}\to\RE\cup\{-\infty\}$ be a given upper semicontinuous
 function. Furthermore, $\sigma$ the normalized area measure defined on the unit circle
 $\mathbb{T}=\partial\mathbb{D}\subset\C$, and let $\mathcal{A}_{\Omega_N}$ be the set of all
 holomorphic maps $\bar{\mathbb{D}}\to\Omega_N$. Poletsky~\cite{Poletsky1,Poletsky2} and, Bu and
 Schachermayer~\cite{BuSchachermayer},  proved  independently the following equality:
\[
\sup \Big\{u(z): u\in \mathcal {PSH}(\Omega_N), u\leq \varphi\Big\}=\inf\left\{\int_{\mathbb T}
\varphi\circ f\,d\sigma: f\in \mathcal A_{\Omega_N}, f(0)=z \right\}\, .
\]
This equality is known as the \emph{Poisson disc formula}. The main reason why this is possible is
due to the rich structure of $\mathcal A_{\Omega_N}$ and Theorem~A. From Theorem~C it follows that
there is no longer any hope for a similar formula in the Caffarelli-Nirenberg-Spruck model.
Certainly, Theorem~B immediately implies that this is not possible, but only in  the case $\mathcal
{SH}_1$. For a similar phenomena in the Caffarelli-Nirenberg-Spruck model see e.g. Section~5
in~\cite{AhagCzyzHed1}.
On the other hand, there has been some progress obtaining a Poisson type formula for $\mathcal
{SH}_k$ using Choquet theory and representing measures, instead of Poletsky disks (see e.g.
Theorem~2.8 in~\cite{AhagCzyzHed2} and the references therein).

\medskip

For general information on potential theory see e.g.~\cite{armitage}, and for more information
about pluripotential theory we refer to~\cite{demailly_bok, K}.

\medskip

We would like to thank Marek Jarnicki, Maciej Klimek and Aron Persson for inspiring discussions, as
well as Sigmundur Gudmundsson and Ragnar Sigurdsson for their inspiring
article~\cite{gudmundssonsigurdsson}. We are very grateful to S\l awomir Dinew for letting us
incorporate his proof of Theorem~\ref{holo} (\cite{Dinew}).

\section{Proof of Theorem~A}\label{sec_firstpart}

Before we give a proof of Theorem~A we recall that for a $\mathcal C^2$-smooth (plurisubharmonic)
function $u$ defined on an open set $\Omega_N\subseteq \mathbb C^N$ its Levi form is defined by
\[
\mathcal L(u,a;X)=\sum_{j,k=1}^N\frac {\partial ^2u}{\partial z_j\partial \bar
z_k}\left(a\right)X_j\bar X_k\, ,
\]
where $a\in \Omega_N, X=(X_1,\ldots,X_N)\in \mathbb C^N$.

\medskip

\noindent \textbf{Theorem~A.} \emph{Let $f=(f_1,\ldots,f_M):\Omega_N\to\Omega_M$ be a function,
where $\Omega_N\subseteq\mathbb{C}^N$ and $\Omega_M\subseteq\mathbb{C}^M$, $N,M\geq 1$ are
connected and open sets with $N,M\geq 1$. Then the following assertions are equivalent:}
\begin{enumerate}\itemsep2mm

\item \emph{for every plurisubharmonic function $\varphi:\Omega_M\to\RE\cup\{-\infty\}$, the
    function $(\varphi\circ f):\Omega_N\to\RE\cup\{-\infty\}$ is plurisubharmonic;}

\item \emph{for every pluriharmonic function $h:\Omega_M\to\RE$, the function $(h\circ
    f):\Omega_N\to\RE$ is pluriharmonic;}

\item \emph{$f$ is holomorphic or $\bar f$ is holomorphic.}
\end{enumerate}
\begin{proof} The implication $(1)\Rightarrow(2)$ is immediate. Next we shall proceed with the
implication
$(3)\Rightarrow(1)$. Without loss of generality we can assume that the functions $\varphi$, and
$f$, are smooth, since the general case then follows by approximation. For a fixed $z\in \Omega_N$
we have for $1\leq i\leq N$ that
\[
\frac {\partial (\varphi\circ f)}{\partial z_j}(z) =\sum_{s=1}^M\left(\frac {\partial
\varphi}{\partial w_s}(f(z))\frac {\partial f_s}{\partial z_j}(z)+\frac {\partial \varphi}{\partial
\bar w_s}(f(z))\frac {\partial \bar f_s}{\partial z_j}(z) \right)\, ,
\]
and for $1\leq k\leq K$\, ,
\[
\frac {\partial (\varphi\circ f)}{\partial \bar z_k}(z) =\sum_{s=1}^M\left(\frac {\partial
\varphi}{\partial w_s}(f(z))\frac {\partial f_s}{\partial \bar z_k}(z)+\frac {\partial
\varphi}{\partial \bar w_s}(f(z))\frac {\partial \bar f_s}{\partial \bar z_k}(z) \right)\, ,
\]
which give us the following formula
\begin{multline}\label{pd}
\frac {\partial ^2(\varphi\circ f)}{\partial \bar z_k\partial z_j}(z)
=\sum_{r,s=1}^{M}\frac {\partial^2 \varphi}{\partial w_r\partial w_s}(f(z))\frac {\partial
f_r}{\partial \bar z_k}(z)\frac {\partial f_s}{\partial z_j}(z)\\ +
\sum_{r,s=1}^{M}\frac {\partial^2 \varphi}{\partial \bar w_r\partial \bar w_s}(f(z))\frac {\partial
\bar f_r}{\partial \bar z_k}(z)\frac {\partial \bar f_s}{\partial z_j}(z) +
\sum_{r,s=1}^{M}\frac {\partial^2 \varphi}{\partial \bar w_r\partial w_s}(f(z))\frac {\partial \bar
f_r}{\partial \bar z_k}(z)\frac {\partial f_s}{\partial z_j}(z)\\ +\sum_{r,s=1}^{M}\frac
{\partial^2 \varphi}{\partial w_r\partial \bar w_s}(f(z))\frac {\partial f_r}{\partial \bar
z_k}(z)\frac {\partial \bar f_s}{\partial z_j}(z)
+\sum_{r=1}^{M}\frac {\partial \varphi}{\partial \bar w_r}(f(z))\frac {\partial^2 \bar
f_r}{\partial \bar z_k\partial z_j}(z)\\ +\sum_{r=1}^{M}\frac {\partial \varphi}{\partial
w_r}(f(z))\frac {\partial^2 f_r}{\partial \bar z_k\partial z_j}(z)\, .
\end{multline}
For $X=(X_1,\ldots,X_N)\in \mathbb C^N$ let us define the following vectors
\[
\begin{aligned}
U=(U_1,\ldots,U_N), \ \text { where } \ U_r&=\sum_{j=1}^N\frac {\partial f_r}{\partial z_j}(z)X_j,
\\
V=(V_1,\ldots,V_N), \ \text { where } \ V_r&=\sum_{j=1}^N\frac {\partial \bar f_r}{\partial
z_j}(z)X_j\, .
\end{aligned}
\]
Then we have that
\begin{multline}\label{levi}
\mathcal L\left(\varphi\circ f,z;X\right)=\mathcal L\left(\varphi,f(z);U\right)+\mathcal
L\left(\varphi,f(z);V\right)+\\
+\sum_{r,s=1}^{M}\frac {\partial^2 \varphi}{\partial w_r\partial w_s}(f(z))U_r\bar V_s+
\sum_{r,s=1}^{M}\frac {\partial^2 \varphi}{\partial \bar w_r\partial \bar w_s}(f(z))\bar U_rV_s+\\
+\sum_{r=1}^{M}\frac {\partial \varphi}{\partial \bar w_r}(f(z))\mathcal L\left(\bar
f_r,z;X\right)+\sum_{r=1}^{M}\frac {\partial \varphi}{\partial  w_r}(f(z))\mathcal
L\left(f_r,z;X\right)\, .
\end{multline}
By the assumption that $f$ is holomorphic or $\bar f$ is holomorphic it follows that either $U=0$
or $V=0$. This implies that~(\ref{levi}) reduces to
\[
\mathcal L\left(\varphi\circ f,z;X\right)=\mathcal L\left(\varphi,f(z);U\right)+\mathcal
L\left(\varphi,f(z);V\right)\geq 0\, ,
\]
and we have obtained condition (1).

Let us now prove $(2)\Rightarrow(3)$. Assume that for every pluriharmonic function
$h:\Omega_M\to\RE$, the function $(h\circ f):\Omega_N\to\RE$ is pluriharmonic.
To gain sufficient information about $f$ we shall choose four specific pluriharmonic functions $h$.
Let $X\in \mathbb C^N$, and consider the following cases:
\begin{itemize}\itemsep2mm
\item[a)] if $h(w_1,\ldots,w_M)=2\operatorname{Re}(w_r)=w_r+\bar w_r$, then we get that
\begin{equation}\label{21}
\mathcal L\left(f_r,z;X\right)+\mathcal L\left(\bar f_r,z;X\right)=0
\end{equation}
for all $X\in \mathbb C^N$;

\item[b)] if $h(w_1,\ldots,w_M)=2\operatorname{Im}(w_r)=\frac 1i(w_r-\bar w_r)$,  then we get
    that
\begin{equation}\label{22}
\mathcal L\left(f_r,z;X\right)-\mathcal L\left(\bar f_r,z;X\right)=0
\end{equation}
for all $X\in \mathbb C^N$. Hence, by (\ref{21}) and (\ref{22}) we get
\[
\mathcal L\left(f_r,z;X\right)=\mathcal L\left(\bar f_r,z;X\right)=0,
\]
so $\operatorname {Re}f_r$ and $\operatorname {Im}f_r$ are pluriharmonic, and therefore real
analytic;
\item[c)] if $h(w_1,\ldots,w_M)=2\operatorname{Re}(w_rw_s)=w_sw_r+\bar w_r\bar w_s$, then formula
    (\ref{levi}) takes the following form
\begin{equation}\label{wec1}
0=\mathcal L\left(h\circ f,z;X\right)=U_r\bar V_s+\bar U_rV_s\, .
\end{equation}
\item[d)] if $h(w_1,\ldots,w_M)=2\operatorname{Im}(w_rw_s)=\frac 1i\left(w_sw_r-\bar w_r\bar
    w_s\right)$, then formula (\ref{levi}) takes the following form
\begin{equation}\label{wec2}
0=\mathcal L\left(h\circ f,z;X\right)=\frac 1i\left(U_r\bar V_s-\bar U_rV_s\right)\, .
\end{equation}
\end{itemize}
From (\ref{wec1}) and (\ref{wec2}) we now obtain that
\begin{equation}\label{wec3}
0=U_r\bar V_s=\left(\sum_{j=1}^N\frac {\partial f_r}{\partial
z_j}(z)X_j\right)\left(\sum_{j=1}^N\frac {\partial f_s}{\partial \bar z_j}(z)\bar X_j\right), \
\text { for all } \ r,s=1,\ldots, M.
\end{equation}
In (\ref{wec3}), if $X=e_j=(0,\ldots,1,\ldots,0)$ is taken as the $j$-th vector from the canonical
basis, then it holds that
\begin{equation}\label{a1}
\frac {\partial f_r}{\partial z_j}(z)\frac {\partial f_s}{\partial \bar z_j}(z)=0\, ,
\end{equation}
for all $z\in \Omega_N$, $r,s=1,\ldots,M$, and all $j=1,\ldots,N$.

Now, assume that $N\geq 2$. In (\ref{wec3}), if  $X=e_j+e_k$, $j\neq k$, then we get
\begin{multline}\label{a2}
0=\left(\frac {\partial f_r}{\partial z_j}(z)+\frac {\partial f_r}{\partial
z_k}(z)\right)\left(\frac {\partial f_s}{\partial \bar z_j}(z)+\frac {\partial f_s}{\partial \bar
z_k}(z)\right)
=\frac {\partial f_r}{\partial z_j}(z)\frac {\partial f_s}{\partial \bar z_k}(z)\\ +\frac {\partial
f_r}{\partial z_k}(z)\frac {\partial f_s}{\partial \bar z_j}(z)\, ,
\end{multline}
for all $z\in \Omega_N$, $r,s=1,\ldots,M$, and all $j,k=1,\ldots,N$. To proceed we take
$X=e_j+ie_k$, $j\neq k$, in (\ref{wec3}) and arrive at
\begin{multline}\label{a3}
0=\left(\frac {\partial f_r}{\partial z_j}(z)+i\frac {\partial f_r}{\partial
z_k}(z)\right)\left(\frac {\partial f_s}{\partial \bar z_j}(z)-i\frac {\partial f_s}{\partial \bar
z_k}(z)\right)=\\
=i\left(\frac {\partial f_r}{\partial z_j}(z)\frac {\partial f_s}{\partial \bar z_k}(z)-\frac
{\partial f_r}{\partial z_k}(z)\frac {\partial f_s}{\partial \bar z_j}(z)\right)\, ,
\end{multline}
for all $z\in \Omega_N$, $r,s=1,\ldots,M$, and all $j,k=1,\ldots,N$. From~(\ref{a2})
and~(\ref{a3}), it now follows that
\begin{equation}\label{a4}
\frac {\partial f_r}{\partial z_j}(z)\frac {\partial f_s}{\partial \bar z_k}(z)=0\, ,
\end{equation}
for all $z\in \Omega_N$, $r,s=1,\ldots,M$, and all $j,k=1,\ldots,N$. Since $f$ is real analytic,
(\ref{a4}) implies that
\begin{equation}\label{a5}
\frac {\partial f_r}{\partial z_j}(z)\equiv 0, \quad\text { or } \quad \frac {\partial
f_s}{\partial \bar z_k}(z)\equiv 0.
\end{equation}
If $f$ is a holomorphic mapping, then~(\ref{a4}) is satisfied. Now assume that $f$ is not
holomorphic mapping, then there exist $z\in \Omega_N$, $r\in \{1,\ldots,M\}$ and $j\in
\{1,\ldots,N\}$ such that
$\frac {\partial f_r}{\partial \bar z_j}(z)\neq 0$. Therefore, by (\ref{a4}) and (\ref{a5}), we get
that
\[
\frac {\partial f_s}{\partial z_k}(z)\equiv 0\, ,
\]
for all $s=1,\ldots,M$, and all $k=1,\ldots,N$. This means that $\bar{f}$ is holomorphic.

Finally if $N=1$, then using (\ref{a1}), and the argument above we get the desired conclusion.
\end{proof}

\section{The Caffarelli-Nirenberg-Spruck model and the proof of Theorem~C}\label{sec_CNS}

We start this section with introducing the necessary definitions and basic facts related to the
Caffarelli-Nirenberg-Spruck model in order to prove Theorem~C. For further information see
e.g.~\cite{SA,L}.

Let $\Omega_N\subseteq\C^N$ be a connected and open set, and $1\leq k\leq N$. By $\mathbb
C_{(1,1)}$ we denote the space of $(1,1)$-forms with constant coefficients, and then we define
\[
\Gamma_k=\left\{\alpha\in \mathbb C_{(1,1)}: \alpha\wedge \beta^{N-1}\geq 0, \ldots ,
\alpha^k\wedge \beta ^{N-k}\geq 0   \right\},
\]
where $\beta=dd^c|z|^2$ is the canonical K\"{a}hler form on $\mathbb C^N$.

\begin{definition}\label{m-sh} Assume that $\Omega_N \subseteq \C^N$ is a connected and open set,
and let $u$ be a subharmonic function defined on $\Omega_N$. Then we say that $u$ is
\emph{$k$-subharmonic} on $\Omega_N$, $1\leq k\leq N$, if the following inequality holds
\[
dd^cu\wedge\alpha_1\wedge\cdots\wedge\alpha_{k-1}\wedge\beta^{N-k}\geq 0\, ,
\]
in the sense of currents for all $\alpha_1,\ldots,\alpha_{k-1}\in \Gamma_k$. With
$\mathcal{SH}_k(\Omega_N)$ we denote the set of all $k$-subharmonic functions defined on
$\Omega_N$. We also say that $u$ is \emph{$k$-pluriharmonic} on  $\Omega_N$, if $u,-u\in \mathcal
{SH}_k(\Omega_N)$. We shall use the notation $\mathcal {PH}_k(\Omega_N)$ for the set of all
$k$-pluriharmonic functions defined on $\Omega_N$.
\end{definition}
\begin{remark}
Since $N$ is the dimension of $\Omega_N$ we have that
\[
\mathcal{PSH}(\Omega_N)=\mathcal{SH}_N(\Omega_N)\subset\cdots\subset\mathcal{SH}_1(\Omega_N)=\mathcal{SH}(\Omega_N)\,
.
\]
Furthermore, $\mathcal {PH}_1(\Omega_N)=\mathcal {H}(\Omega_N)$ and $\mathcal
{PH}_N(\Omega_N)=\mathcal {PH}(\Omega_N)$. Here,
$\mathcal {H}(\Omega_N)$ is the set of real-valued harmonic functions defined on $\Omega_N$, and
$\mathcal {PH}(\Omega_N)$ is the set of real-valued pluriharmonic functions defined on $\Omega_N$.
\end{remark}
Recall that the complex Hessian matrix of a $\mathcal C^2$-smooth function $u$ is given by
\[
\operatorname H_u(z)=\left [\frac {\partial ^2u}{\partial z_j\partial \bar
z_k}(z)\right]_{j,k=1}^N\, .
\]
Now we shall need the following notions. Let $\sigma_{k,N}$ be a $k$-elementary symmetric
polynomial of $N$-variable and $k\leq N$, i.e.,
\[
\sigma_{k,N}(x)=\sigma_{k,N}(x_1,\ldots,x_N)=\sum_{1\leq j_1<\cdots<j_k\leq
N}x_{j_1}\cdot\ldots\cdot x_{j_k}\, .
\]
Let us define the following sets
\[
\Lambda_{k,N}=\left \{x\in \mathbb R^N: \sigma_{l,N}(x)\geq 0,\, l=1,\ldots,k \right\}.
\]
It can be proved that a $\mathcal C^2$ function $u$ is $k$-subharmonic if, and only if,
\[
\lambda(z)=(\lambda_1(z),\ldots,\lambda_N(z))\in \Lambda_{k,N}\, ,
\]
where $\lambda_1(z),\ldots,\lambda_N(z)$ are eigenvalues of the Hessian matrix $\operatorname
H_u(z)$. This is one of
the key ingredients in the proof of Theorem~C.

\begin{remark}
Let $\lambda_1,\dots,\lambda_N$ be eigenvalues of a square matrix $A=[a_{ij}]$ of dimension $N$.
Let us recall that
\[
\operatorname{det}\left (I+tA\right)=t^N+\sum_{k=1}^N\sigma_{k,N}(A)t^{N-k},
\]
where $I$ is the identity matrix and $\sigma_{k,N}(A)=\sigma_{k,N}(\lambda_1,\ldots,\lambda_N)$.
Furthermore, by the
Faddeev-Le Verrier algorithm, we have the following:
\[
\sigma_{k,N}(A)=\frac {1}{k!}\operatorname{det}\left[
  \begin{array}{ccccccc}
   T_1 & k-1 & 0 & \dots & 0 & 0 & 0 \\
   T_2 & T_1 & k-2 & \dots & 0 & 0 & 0 \\
   T_3 & T_2 & T_1 & \dots & 0 & 0 & 0 \\
   \vdots & \vdots & \vdots & \ddots & \vdots & \vdots & \vdots \\
   T_{k-2} & T_{k-3} & T_{k-4} & \dots & T_1 & 2 & 0 \\
   T_{k-1} & T_{k-2} & T_{k-3} & \dots & T_2 & T_1 & 1 \\
   T_{k} & T_{k-1} & T_{k-2} & \dots & T_3 & T_2 & T_1 \\
  \end{array}
\right]  ,
\]
where $T_k=\operatorname{tr}(A^k)$. In particular, for $k=2$ we have that
\begin{equation}\label{sigma2}
\sigma_{2,N}(A)=\frac 12 \left(\left(\operatorname{tr}(A)\right)^2 -
\operatorname{tr}\left(A^2\right)\right)=\frac12\sum_{j\neq
k}\left(a_{jj}a_{kk}-a_{jk}a_{kj}\right).
\end{equation}
We shall make use of~(\ref{sigma2}) in the proof of Theorem~\ref{holo}. For further information
about the Faddeev-Le Verrier algorithm see e.g.~\cite{hou}, and the references therein.
\end{remark}

Next, in Proposition~\ref{prop_kPH}, we shall characterize the $k$-pluriharmonic functions.

\begin{proposition}\label{prop_kPH} Assume that $\Omega_N \subseteq \C^N$ is a connected and open
set. Then,
$\mathcal {PH}_k(\Omega_N)=\mathcal {PH}(\Omega_N)$ for all $2\leq k\leq N$.
\end{proposition}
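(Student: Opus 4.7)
The inclusion $\mathcal{PH}(\Omega_N)\subseteq \mathcal{PH}_k(\Omega_N)$ is immediate from the chain $\mathcal{PSH}(\Omega_N)=\mathcal{SH}_N(\Omega_N)\subseteq \mathcal{SH}_k(\Omega_N)$ recorded in the remark after Definition~\ref{m-sh}: a pluriharmonic $u$ has $\pm u\in \mathcal{PSH}\subseteq \mathcal{SH}_k$. The content of the proposition is the reverse inclusion $\mathcal{PH}_k(\Omega_N)\subseteq \mathcal{PH}(\Omega_N)$ for $k\geq 2$.

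To establish it, I would fix $u\in \mathcal{PH}_k(\Omega_N)$. Since $\mathcal{SH}_k(\Omega_N)\subseteq \mathcal{SH}_1(\Omega_N)=\mathcal{SH}(\Omega_N)$, both $u$ and $-u$ are subharmonic, so $u$ is harmonic on $\Omega_N$ and hence real-analytic. In particular the complex Hessian matrix $\operatorname{H}_u(z)$ is pointwise defined, Hermitian, and has real eigenvalues $\lambda(z)=(\lambda_1(z),\ldots,\lambda_N(z))$, and we may apply the pointwise eigenvalue characterization recalled in the excerpt: $u\in \mathcal{SH}_k$ is equivalent to $\lambda(z)\in \Lambda_{k,N}$, i.e. $\sigma_{l,N}(\lambda(z))\geq 0$ for $l=1,\ldots,k$.

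Applying this to both $u$ and $-u$ gives, for every $z\in\Omega_N$,
\[
\sigma_{1,N}(\lambda(z))\geq 0,\qquad \sigma_{1,N}(-\lambda(z))=-\sigma_{1,N}(\lambda(z))\geq 0,
\]
so $\sigma_{1,N}(\lambda(z))=\operatorname{tr}(\operatorname{H}_u(z))=0$, and similarly
\[
\sigma_{2,N}(\lambda(z))\geq 0,\qquad \sigma_{2,N}(-\lambda(z))=\sigma_{2,N}(\lambda(z))\geq 0.
\]
Now invoke formula~(\ref{sigma2}) applied to $A=\operatorname{H}_u(z)$: since $\operatorname{tr}(\operatorname{H}_u)=0$,
\[
0\leq \sigma_{2,N}(\lambda(z))=-\tfrac{1}{2}\operatorname{tr}(\operatorname{H}_u(z)^2).
\]
Because $\operatorname{H}_u(z)$ is Hermitian, $\operatorname{tr}(\operatorname{H}_u(z)^2)=\sum_j \lambda_j(z)^2\geq 0$, so $\operatorname{tr}(\operatorname{H}_u(z)^2)=0$ and thus every eigenvalue vanishes, i.e. $\operatorname{H}_u(z)=0$. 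Consequently $dd^c u\equiv 0$ on $\Omega_N$, which is exactly the statement that $u$ is pluriharmonic, proving $\mathcal{PH}_k(\Omega_N)\subseteq \mathcal{PH}(\Omega_N)$.

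The only step where one might worry is regularity: the $k$-subharmonicity is defined in the sense of currents, so $\operatorname{H}_u$ need not exist pointwise a priori. This is handled for free because $u,-u\in \mathcal{SH}_1$ already forces $u$ to be harmonic and therefore smooth, at which point the eigenvalue characterization is the standard one and the above pointwise argument applies verbatim. Observe also that only $k=2$ is actually used; the hypothesis $2\leq k\leq N$ is needed only so that the $\sigma_{2,N}$-condition is included in the definition.
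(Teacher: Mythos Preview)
Your proof is correct and follows essentially the same route as the paper: both reduce to the $k=2$ case, use harmonicity of $u$ to obtain smoothness, and then combine $\sigma_{1,N}(\lambda)=0$ with $\sigma_{2,N}(\lambda)\geq 0$ via the identity $(\sum_j\lambda_j)^2=\sum_j\lambda_j^2+2\sigma_{2,N}(\lambda)$ (which is exactly your use of~(\ref{sigma2})) to force all eigenvalues to vanish. The only differences are cosmetic---you spell out the easy inclusion and the regularity remark, while the paper leaves these implicit.
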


\begin{proof} We prove the case $k=2$. The other cases can be proved in a similar way. First note that if $u\in \mathcal {PH}_2(\Omega_N)$, then $u$ is a harmonic function,
and therefore it is smooth. If
$\lambda_1(z),\ldots,\lambda_N(z)$ are the eigenvalues of the Hessian matrix $\operatorname
H_u(z)$, then
$-\lambda_1(z),\ldots,-\lambda_N(z)$ are the eigenvalues of the Hessian matrix $\operatorname
H_{-u}(z)$. Therefore,  we get that
\[
\lambda_1(z)+\ldots+\lambda_N(z)=0, \quad \text { and } \ \sum_{j<k}\lambda_j(z)\lambda_k(z)\geq
0\, .
\]
Hence,
\[
0=(\lambda_1(z)+\ldots+\lambda_N(z))^2=\sum_{j}\lambda^2_j(z) +
2\sum_{j<k}\lambda_j(z)\lambda_k(z)\geq 0\, ,
\]
which means that $\lambda_1(z)=\ldots=\lambda_N(z)=0$. Thus, $u$ is a pluriharmonic function.
\end{proof}

In Proposition~\ref{cone} some elementary properties of $\Lambda_{k,N}$ is presented.
Proposition~\ref{cone} shall be used in Lemma~\ref{lem4} as well as in the proof of Theorem~C.

\begin{proposition}\label{cone}
The sets $\Lambda_{k,N}$ have the following properties:
\begin{enumerate}\itemsep2mm
\item $\mathbb R_+^N=\Lambda_{N,N}\subset \Lambda_{N-1,N}\subset\cdots \subset \Lambda_{1,N}$;

\item $\Lambda_{k,N}$ is a closed convex cone for all $1\leq k\leq N$;

\item if $x=(x_1,\ldots,x_N)\in \Lambda_{k,N}$, then
\[
(x_1,\ldots,x_N,\overbrace{0,\ldots,0}^{l})\in \Lambda_{k,N+l}\, ;
\]
\item $(1,\ldots,1,x)\in \Lambda_{k,N}$ if, and only if, $x\geq 1-\frac Nk$. In particular,
\[
\left(1,\ldots,1,1-\frac Nk\right)\in \Lambda_{k,N}\, .
\]
\end{enumerate}
\end{proposition}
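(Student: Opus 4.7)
My plan is to derive all four assertions directly from the defining inequalities for $\Lambda_{k,N}$. Three of them are essentially combinatorial and follow from direct manipulation of the polynomials $\sigma_{l,N}$; only the convexity claim in (2) requires substantial input, which I would import from G\aa rding's theory of hyperbolic polynomials.

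For (1), the inclusion $\Lambda_{k,N}\subset\Lambda_{k-1,N}$ is immediate, since the conditions defining $\Lambda_{k,N}$ contain those defining $\Lambda_{k-1,N}$. For the equality $\mathbb R_+^N=\Lambda_{N,N}$, one direction is clear because each $\sigma_{l,N}$ is a sum of products of its arguments and is therefore nonnegative on $\mathbb R_+^N$; conversely, if $x\in\Lambda_{N,N}$, then
\[
\prod_{j=1}^N(s+x_j)=s^N+\sum_{l=1}^N\sigma_{l,N}(x)\,s^{N-l}
\]
has only nonnegative coefficients, is therefore strictly positive for $s>0$, and so its real roots $-x_j$ all lie in $(-\infty,0]$, giving $x_j\geq 0$. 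For (3), any monomial in $\sigma_{l,N+l'}(x_1,\ldots,x_N,0,\ldots,0)$ whose index set meets the appended zero coordinates vanishes, so $\sigma_{l,N+l'}(x_1,\ldots,x_N,0,\ldots,0)=\sigma_{l,N}(x_1,\ldots,x_N)$ for $l\leq N$, which covers all $l\leq k$ and yields the claim.

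For (4), a direct count, by conditioning on whether the chosen $l$-subset includes the last coordinate, gives
\[
\sigma_{l,N}(1,\ldots,1,x)=\binom{N-1}{l}+\binom{N-1}{l-1}x,
\]
so $\sigma_{l,N}(1,\ldots,1,x)\geq 0$ rewrites as $x\geq -(N-l)/l=1-N/l$. Since $l\mapsto 1-N/l$ is strictly increasing on $\{1,\ldots,k\}$, the tightest of these constraints occurs at $l=k$ and reads $x\geq 1-N/k$; the boundary case $x=1-N/k$ then witnesses $(1,\ldots,1,1-N/k)\in\Lambda_{k,N}$.

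Finally, for (2), the cone property follows from the homogeneity $\sigma_{l,N}(tx)=t^l\sigma_{l,N}(x)$ for $t\geq 0$, and closedness is immediate from continuity of each $\sigma_{l,N}$. Convexity is the genuine obstacle, because the individual sets $\{\sigma_{l,N}\geq 0\}$ are not convex for $l\geq 2$ (already $\{x_1x_2\geq 0\}$ in $\mathbb R^2$ fails). I would handle this by invoking G\aa rding's theorem on hyperbolic polynomials: $\sigma_{k,N}$ is hyperbolic with respect to $\mathbf 1=(1,\ldots,1)$, since the polynomial $t\mapsto\sigma_{k,N}(x+t\mathbf 1)$ has only real roots, which one checks by applying Rolle's theorem to the factored polynomial $\prod_{j=1}^N(s+x_j+t)$. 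G\aa rding's theorem then yields that the connected component of $\{\sigma_{k,N}>0\}$ containing $\mathbf 1$ is an open convex cone; a classical identification shows this component coincides with $\{x:\sigma_{l,N}(x)>0,\ l=1,\ldots,k\}$, whose closure is exactly $\Lambda_{k,N}$, proving convexity.
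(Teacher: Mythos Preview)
Your argument is correct. For part~(4) your computation $\sigma_{l,N}(1,\ldots,1,x)=\binom{N-1}{l}+\binom{N-1}{l-1}x$ is algebraically identical to the paper's formula $\binom{N}{l}(N+l(x-1))/N$, and the conclusion is drawn in the same way.

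For parts~(1)--(3) the paper gives no proof at all: it simply refers to B{\l}ocki's article on weak solutions to the complex Hessian equation. Your treatment is therefore strictly more informative. The arguments you give for (1) and (3) are the natural combinatorial ones; for (2), your route via G\aa rding's hyperbolicity theorem is precisely the standard argument that underlies the cited literature (Caffarelli--Nirenberg--Spruck, B{\l}ocki), so while the presentation differs from the paper's bare citation, the mathematical content is the same. One small point worth making explicit in your write-up of (2): to pass from convexity of the open G\aa rding cone to convexity of $\Lambda_{k,N}$ you need that the former is dense in the latter, which follows because $x\in\Lambda_{k,N}$ implies $x+\varepsilon\mathbf{1}$ lies in the open cone for every $\varepsilon>0$.
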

\begin{proof}
For properties (1), (2) and (3)  see e.g.~\cite{B}. Property~(4) follows from that
\[
\sigma_{l,N}\left(1,\ldots,1,x\right)=\binom{N}{l}\frac {N+l(x-1)}{N}\geq 0\, ,
\]
for all $l=1,\ldots,k$ if, and only if, $x\geq 1-\frac Nk$.
\end{proof}

In Lemma~\ref{lem4} as well as in the proof of Theorem~C we shall make use of the so called
\emph{Hadamard product} for vectors in $\mathbb{R}^L$. Recall that for
$x=(x_1,\ldots,x_L),y=(y_1,\ldots,y_L)\in \mathbb R^L$ we define
\[
x\odot y=(x_1y_1,\ldots,x_Ly_L)\, .
\]

The following lemma is of significant importance in the proof of Theorem~C.

\begin{lemma}\label{lem4}
Let $y=(y_1,\ldots,y_L)\in \mathbb R_+^{L}$ and let $k\leq l$, $k<K$. Then the following statements are
true:
\begin{enumerate}\itemsep2mm
\item if $L<K$ and for any $x\in \Lambda_{k,K}$ holds
\[
y\odot x\big|_L=(y_1 x_1,\ldots,y_L x_L)\in \Lambda_{l,L}\, ,
\]
where $x\big|_L=(x_1,\ldots,x_L)$, then it holds that $y_1=\ldots=y_L=0$;

\item if $L\geq K$ and  for any $x\in \Lambda_{k,K}$ holds
\[
y\odot x\big|^L=(y_1x_1,\ldots,y_K x_K,0,\ldots,0)\in \Lambda_{l,L}\, ,
\]
where $x\big|^L=(x_1,\ldots,x_K,0,\ldots,0)\in \mathbb R^L$, then it holds that
\[
\begin{cases}
y_1=\ldots=y_L & \text{ if } k=l\, , \\

y_1=\ldots=y_L=0 & \text{ if } k<l\, .
\end{cases}
\]
\end{enumerate}
\end{lemma}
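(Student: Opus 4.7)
The plan is to test both conditions against the extremal vectors produced by Proposition~\ref{cone}(4). For every index $j$, let $x^{(j)}\in\Lambda_{k,K}$ be the permutation of $(1,\ldots,1,1-K/k)$ that places the entry $1-K/k$ at position $j$; the permutation symmetry of $\Lambda_{k,K}$ keeps $x^{(j)}$ in the cone, and $k<K$ makes that distinguished entry strictly negative. Splitting an elementary symmetric polynomial according to whether position $j$ is included gives, for every $\ell$,
\[
\sigma_\ell\bigl(y\odot x^{(j)}|_{\bullet}\bigr)=\sigma_\ell(y^{(-j)})+y_j\bigl(1-\tfrac{K}{k}\bigr)\sigma_{\ell-1}(y^{(-j)}),
\]
where $|_{\bullet}$ stands for $|_L$ in Case~(1) and for $|^L$ in Case~(2) (in the latter the padding zeros contribute nothing), and $y^{(-j)}$ denotes $y$ with its $j$-th entry deleted, truncated to the first $L$ coordinates in Case~(1) and to the first $K$ coordinates in Case~(2). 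The hypothesis turns each of these expressions into a non-negative quantity for every $\ell\leq l$.

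For Case~(1) the key step is to sum the $\ell=l$ inequality over $j=1,\ldots,L$, using the standard identities $\sum_j\sigma_l(y^{(-j)})=(L-l)\sigma_l(y)$ and $\sum_j y_j\sigma_{l-1}(y^{(-j)})=l\sigma_l(y)$. The sum collapses to $\sigma_l(y)(kL-lK)\geq 0$; since $L<K$ and $l\geq k$ force $kL<lK$, while $\sigma_l(y)\geq 0$ because $y\geq 0$, one obtains $\sigma_l(y)=0$. This confines the positive set $P=\{j:y_j>0\}$ to cardinality $p\leq l-1$. Assuming for contradiction that $P$ is non-empty, pick $j_0\in P$ and re-evaluate at $x^{(j_0)}$: the vector $y\odot x^{(j_0)}|_L$ has exactly $p$ nonzero entries, all positive except for the strictly negative entry at position $j_0$, so $\sigma_p$ of that vector equals the single product $y_{j_0}(1-K/k)\prod_{i\in P\setminus\{j_0\}}y_i<0$. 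Because $p\leq l-1\leq l$ this violates the required $\sigma_p\geq 0$, so $P=\emptyset$ and $y=0$.

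Case~(2) rests on the preliminary observation that $\sigma_{m,L}(y\odot x|^L)=\sigma_{m,K}(y_1x_1,\ldots,y_Kx_K)$ for $m\leq K$ and vanishes for $m>K$, so the hypothesis amounts to $(y_1x_1,\ldots,y_Kx_K)\in\Lambda_{\min(l,K),K}$ for every $x\in\Lambda_{k,K}$. For the sub-case $k=l$ the analogous summation, now over $j=1,\ldots,K$, collapses to $(K-k)\sigma_k(y)-(K-k)\sigma_k(y)\geq 0$; since this is a sum of non-negative terms equalling zero, each term vanishes, giving the symmetric system $y_j\sigma_{k-1}(y^{(-j)})=(k/K)\sigma_k(y)$ for every $j$. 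When $\sigma_k(y)>0$ every entry of $y$ is strictly positive, and expanding $\sigma_{k-1}(y^{(-j)})=\sigma_{k-1}(y^{(-i,-j)})+y_i\sigma_{k-2}(y^{(-i,-j)})$ reduces the pairwise equality $y_i\sigma_{k-1}(y^{(-i)})=y_j\sigma_{k-1}(y^{(-j)})$ to $(y_i-y_j)\sigma_{k-1}(y^{(-i,-j)})=0$; since $K-2\geq k-1$ the second factor is strictly positive and $y_i=y_j$. When $\sigma_k(y)=0$ the single-index test vector trick from Case~(1) eliminates every positive coordinate. In either event $y_1=\cdots=y_K$. Finally, for $k<l$ the $k$-case conclusion gives $y_j=c$ for every $j\leq K$, and testing $x=(1,\ldots,1,1-K/k)$ against the extra constraint $\sigma_{k+1,K}\geq 0$ forces $c^{k+1}\sigma_{k+1,K}(x)\geq 0$; the elementary computation of Proposition~\ref{cone}(4) yields $\sigma_{k+1,K}(x)=-\binom{K}{k+1}/k<0$, so $c=0$.

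The main obstacle is the boundary case $\sigma_l(y)=0$ in Case~(1), and its twin $\sigma_k(y)=0$ in the $k=l$ sub-case of Case~(2), where the summation step is by itself inconclusive: one has to return to a single-index test vector and exploit the strict negativity of the top surviving elementary symmetric polynomial to kill any remaining positive entry. The combinatorial identities for sums of elementary symmetric polynomials over sub-collections supply the bookkeeping backbone that makes the summation step go through cleanly.
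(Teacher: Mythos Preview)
Your argument is correct and takes a different route from the paper's. The paper proceeds through an ad-hoc sequence of test vectors: for Property~(1) with $k=l$ it first treats the case where every $y_j>0$ using the vector $(1,\ldots,1,0,\ldots,0,1-\tfrac{k+1}{k})$ with $k$ ones to force any $k+1$ coordinates of $y$ to coincide, then applies $(1-\tfrac Kk,1,\ldots,1)$ to push the common value to zero, and afterwards runs a separate induction when some $y_j$ vanish, splitting further according to whether the number of zero coordinates is $\geq k$ or $<k$. Your approach replaces all of this by the single family $x^{(j)}$ of permutations of $(1,\ldots,1,1-\tfrac Kk)$ together with the summation identities $\sum_j\sigma_l(y^{(-j)})=(L-l)\sigma_l(y)$ and $\sum_j y_j\sigma_{l-1}(y^{(-j)})=l\sigma_l(y)$: in Case~(1) this collapses to the single inequality $(kL-lK)\,\sigma_l(y)\geq 0$, and in Case~(2) with $k=l$ the vanishing of a sum of non-negative terms yields directly the rigid system $y_j\sigma_{k-1}(y^{(-j)})=(k/K)\sigma_k(y)$, from which the pairwise reduction $(y_i-y_j)\sigma_{k-1}(y^{(-i,-j)})=0$ is clean. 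The residual boundary situations $\sigma_l(y)=0$ and $\sigma_k(\tilde y)=0$ are then dispatched uniformly by your single-index trick. The paper's approach trades a longer case analysis for more elementary individual steps; yours gives a unified and considerably shorter argument.

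One small remark: in Case~(2) you correctly stop at $y_1=\cdots=y_K$, which is all the hypothesis can yield since $y_{K+1},\ldots,y_L$ never enter $y\odot x|^L$. The stated conclusion $y_1=\cdots=y_L$ appears to be a slip in the paper; only the first $K$ coordinates are actually used downstream in the proof of Theorem~C.
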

\begin{proof} \emph{Property (1):} Assume that $L<K$. Let $y=(y_1,\ldots,y_L)\in \mathbb R^L_+$.
Without loss of generality we can always assume that $0\leq y_1\leq y_2\leq \cdots \leq y_L$.

\medskip

\emph{The case $k=l$.} First assume that $y_j\neq 0$, for $j=1,\ldots, L$. Take
\[
x=\left(\overbrace{1,\ldots,1}^{k},\overbrace{0,\ldots,0}^{K-k-1},1-\frac {k+1}{k},\right)\in
\Lambda_{k,K}\, ,
\]
and note that
\begin{multline*}
\sigma_{k,L}(y\odot x|_L)=\sigma_{k,L}\left(y_1,\ldots,y_k,0,\ldots,0,\left(1-\frac
{k+1}{k}\right)y_L\right) \\
=y_1\cdots y_{k}y_L\left(\frac {1}{y_L}+\left(1-\frac {k+1}{k}\right)\sum_{j=1}^k\frac
{1}{y_j}\right)\\
\leq y_1\cdots y_{k}y_L\left(\frac {1}{y_L}-\frac {1}{k}\sum_{j=1}^{k}\frac {1}{y_L}\right)=0\, .
\end{multline*}
Next, observe that $y\odot x\big|_L\in \Lambda_{k,L}$ if, and only if, $y_1=\ldots=y_{k}=y_L$.
Since one can permutate coordinates in the vector $x$, then any $k+1$ coordinates of the vector $y$
must be equal. Therefore, it is  obtained that $y_1=\ldots=y_L=a$. To continue, let
\[
x=\left(1-\frac {K}{k},\overbrace{1,\dots,1}^{K-1}\right)\in \Lambda_{k,K}\, ,
\]
and therefore we have that
\[
y\odot x\big|_L=\left(\left(1-\frac {K}{k}\right)a,\overbrace{a,\dots,a}^{L-1}\right)\, .
\]
Hence,
\begin{multline*}
\sigma_k\left(\left(1-\frac {K}{k}\right)a,\overbrace{a,\dots,a}^{L-1}\right)=a^k\left(\binom
{L-1}{k}\cdot 1+\binom{L-1}{k-1}\cdot 1\cdot\left(1-\frac Kk\right)\right)\\
=a^k\binom{L}{k}\frac {L-K}{L}\leq 0\, .
\end{multline*}
This means that $y\odot x\big|_L\in \Lambda_{k,L}$ if, and only if, $a=0$. Furthermore, assume that
the first $s$ coordinates of the vector $y$ vanish, i.e., $y=(0,\dots,0,y_{s+1},\dots,y_L)$. Assume
additionally that $s\geq k$, and for $j>s$ let
\[
x=\left(\overbrace{1,\dots,1}^{s},0,\dots,0,\overbrace{1-\frac
{s+1}{k}}^{j},0,\dots,0\right)\in\Lambda_{k,L}\, .
\]
Then, we have that
\[
y\odot x|_L=\left(0,\dots,0,\overbrace{\left(1-\frac {s+1}{k}\right)y_j}^{j\text{-th
position}},0,\dots,0\right)\, ,
\]
so $y\odot x|_L\in \Lambda_{k,L}$ if, and only if, $y_j=0$. Next, assume that $s<k$ and take
\[
x=\left(\overbrace{1,\dots,1}^{k},1-\frac {k+1}{k},0,\dots,0\right)\in\Lambda_{k,L}\, .
\]
From this it follows that
\[
y\odot x|_L=\left(\overbrace{0,\dots,0}^{s},y_{s+1},\dots,y_k,\left(1-\frac
{k+1}{k}\right)y_{k+1},0,\dots,0\right)\, ,
\]
and then
\[
\sigma_{k-s}(y\odot x|_L)=y_{s+1}\cdots y_{k+1}\left(1-\frac {k+1}{k}\right)\leq 0\, .
\]
Therefore one of  $y_{s+1},\ldots,y_{k+1}$ must be zero, say $y_p$. We are adding this zero, $y_p$,
to the first $s$ zero coordinates, and then we repeat
the argument above. From this procedure we then obtain inductively that $y_1=\ldots=y_L=0$.

\medskip

\emph{The case $k<l$.} From Proposition~\ref{cone} (1) we have that $\Lambda_{l,L}\subset
\Lambda_{k,L}$, and therefore by the previous case we have that
$y_1=\ldots=y_L=0$.

\bigskip

\noindent\emph{Property (2):} Assume that $L\geq K$.

\medskip

\emph{The case $k=l$.} We can proceed as the case $k=l$ in Property (1) to get that
$y_1=\ldots=y_n=a$.

\medskip

\emph{The case $k<l$.} From Proposition~\ref{cone} (1) it follows that $\Lambda_{l,L}\subset
\Lambda_{k,L}$, and therefore by the  previous case we have that $y_1=\ldots=y_L=a$. If we then
take
\[
x=\left (1,\dots,1,1-\frac Kk\right)\in \Lambda_{k,K}\, ,
\]
we get that
\[
y\odot x\big|^L=\left(a,\dots,a,a\left(1-\frac Kk\right),0,\dots,0\right)\in \Lambda_{l,L}
\]
if, and only if, $a=0$.
\end{proof}

Example~\ref{ex3} shows that we can not have a corresponding result to Lemma~\ref{lem4} in the case
$k>l$.

\begin{example}\label{ex3} In Lemma~\ref{lem4}, consider the case when $K=3$, $L=4$, $k=2$ and
$l=1$. Define,
\[
\Omega=\left\{(x_1,x_2,x_3)\in\RE^3: x_1+x_2+x_3\geq 0,\, x_1x_2+x_2x_3+x_3x_1\geq 0 \right\}\, .
\]
Set $y=(1,4,9,1)$, and consider the function $f$ defined on $\Omega$ by
\[
f(x_1,x_2,x_3)=x_1+4x_2+9x_3\, .
\]
Then $f\geq 0$ on $\Omega$. On the other hand, for $y=(1,1,9,1)$ the function $g$ defined on
$\Omega$ by
\[
g(x_1,x_2,x_3)=x_1+x_2+9x_3
\]
attains both positive and negative values. This shows that there is no similar characterization as
in Lemma~\ref{lem4} for the case $k>l$.\hfill{$\Box$}
\end{example}

Next we introduce the formal definition of $(m,n)$-subharmonic morphisms.

\begin{definition}\label{def_sunmorph}
Let $1\leq m\leq M$, $1\leq n\leq N$, and let $\Omega_N\subseteq \mathbb C^N$ and $\Omega_M\subseteq \mathbb C^M$ be
connected and open sets. We say that a function $f=(f_1,\ldots,f_M):\Omega_N\to \Omega_M$ is a \emph{$(m,n)$-subharmonic morphism} if for every $m$-subharmonic function $\varphi:\Omega_M\to\RE\cup\{-\infty\}$, the
function $(\varphi\circ f):\Omega_N\to\RE\cup\{-\infty\}$ is $n$-subharmonic.
\end{definition}

In connection with Definition~\ref{def_sunmorph} we give in Proposition~\ref{prop} some elementary properties of $(m,n)$-subharmonic morphisms.

\begin{proposition}\label{prop}
Let $1\leq m\leq M$, $1\leq n\leq N$ and let $\Omega_N\subseteq \mathbb C^N$ and $\Omega_M\subseteq \mathbb C^M$ be
connected and open sets. The functions below are defined on $\Omega_N$ with range in $\Omega_M$. Then we have that
\begin{enumerate}\itemsep2mm
\item $(1,1)$-subharmonic morphisms are precisely the harmonic morphisms;

\item $(M,N)$-subharmonic morphisms are precisely the family of holomorphic and anti-holomorphic mappings;

\item if $m'\geq m$ and $n'\leq n$, then every $(m,n)$-subharmonic morphism is also a $(m',n')$-subharmonic morphism;

\item every holomorphic or anti-holomorphic mapping is also a $(M,n)$-subharmonic morphism.
\end{enumerate}
\end{proposition}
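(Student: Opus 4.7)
The proposition is a bookkeeping exercise: each of the four statements reduces directly to a result already established in the paper, so my plan is to handle them in turn rather than setting up any new machinery. For (1), I would unravel Definition~\ref{def_sunmorph} to note that a $(1,1)$-subharmonic morphism is exactly a map pulling back subharmonic functions to subharmonic functions, and then invoke the Constantinescu--Cornea equivalence (Corollary~3.2 of~\cite{ConstantinescuCornea}, already cited in the discussion of Theorem~B) which equates this with the harmonic-morphism condition. No holomorphicity on $f$ is needed for this equivalence, so it applies to arbitrary $f:\Omega_N\to\Omega_M$.

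For (2), the identifications $\mathcal{SH}_M(\Omega_M)=\mathcal{PSH}(\Omega_M)$ and $\mathcal{SH}_N(\Omega_N)=\mathcal{PSH}(\Omega_N)$---valid because $M$ and $N$ are the complex dimensions of $\Omega_M$ and $\Omega_N$---show that $(M,N)$-subharmonic morphisms coincide with the maps pulling back plurisubharmonic functions to plurisubharmonic functions. Theorem~A then identifies these as precisely the holomorphic and anti-holomorphic mappings, which is exactly what (2) claims.

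For (3), the only input is the inclusion chain $\mathcal{PSH}=\mathcal{SH}_N\subseteq\cdots\subseteq\mathcal{SH}_1=\mathcal{SH}$ recalled in the introduction: the hypothesis $m'\geq m$ yields $\mathcal{SH}_{m'}(\Omega_M)\subseteq\mathcal{SH}_{m}(\Omega_M)$ and $n'\leq n$ yields $\mathcal{SH}_n(\Omega_N)\subseteq\mathcal{SH}_{n'}(\Omega_N)$, so concatenating these with the $(m,n)$-morphism property of $f$ gives the $(m',n')$-morphism property. For (4), given $\varphi\in\mathcal{SH}_M(\Omega_M)=\mathcal{PSH}(\Omega_M)$, the implication $(3)\Rightarrow(1)$ of Theorem~A applied to a holomorphic or anti-holomorphic $f$ produces $\varphi\circ f\in\mathcal{PSH}(\Omega_N)=\mathcal{SH}_N(\Omega_N)\subseteq\mathcal{SH}_n(\Omega_N)$, using $n\leq N$ for the last inclusion. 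There is no substantive obstacle; the one point meriting care is in (1), where I should flag that the Constantinescu--Cornea equivalence holds for arbitrary continuous maps and does not invoke the holomorphicity hypothesis that Theorem~B imposes only when additionally pinning down the form of $f$.
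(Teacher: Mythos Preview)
Your proposal is correct and follows essentially the same route as the paper: (2) via Theorem~A, (3) via the inclusion chain $\mathcal{SH}_{m'}\subseteq\mathcal{SH}_m$ and $\mathcal{SH}_n\subseteq\mathcal{SH}_{n'}$, and (4) by combining these. For (1) the paper simply cites Theorem~B, whereas you go straight to the Constantinescu--Cornea equivalence underlying the $(1)\Leftrightarrow(2)$ part of Theorem~B; your version is in fact slightly more careful, since Definition~\ref{def_sunmorph} does not assume holomorphicity and it is the Constantinescu--Cornea result (not the full Theorem~B) that applies to arbitrary maps.
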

\begin{proof}
Assertion $(1)$ is a consequence of Theorem~B, while assertion $(2)$ is a consequence of Theorem~A. Property $(3)$ follows from the inclusions
\[
\mathcal {SH}_{m'}(\Omega_M)\subset \mathcal {SH}_m(\Omega_M)  \quad \text{and} \quad \mathcal {SH}_{n'}(\Omega_N)\supset \mathcal {SH}_n(\Omega_N)\, .
\]
Finally, property $(4)$ follows from $(2)$ and $(3)$.
\end{proof}

Next we shall give a proof of Theorem~C. To simplify the presentation we shall use the following
notation for the complex gradient of a function
$g:\mathbb C^N\to \mathbb C$:
\[
\nabla_{\mathbb C}g=\left(\frac {\partial g}{\partial z_1},\ldots,\frac {\partial g}{\partial
z_N}\right)\, .
\]
If $M\leq N$, then recall that a function $f:\mathbb{C}^N\to \mathbb{C}^M$  that is, up to the composition with
a homotethetic map, the canonical orthogonal projection
\[
\mathbb{C}^N= \mathbb{C}^M\times \mathbb{C}^{N-M}\to \mathbb{C}^M
\]
can be represented as
\[
f(z)=cAz+w_0\, ,
\]
where $c\in\RE$,  $w_0\in\C^M$, and $A$ is a $M\times N$  matrix such that there exist
$X_1,\ldots,X_M\in \mathbb C^N$ with $\|X_1\|=\ldots=\|X_M\|$, $\langle X_j,X_k\rangle=0$ for all
$j\neq k$, and
\begin{equation}\label{def_MxNunitary}
A=\left[
\begin{array}{c}
X_1 \\
\vdots \\
X_M \\
\end{array}
\right]\, .
\end{equation}

We shall now prove Theorem~C.

\medskip

\noindent \textbf{Theorem~C.} \emph{Set $M,N>1$, and let $1\leq m<M$, $1\leq n\leq N$, with
$m\leq n$. Also, let $\Omega_N\subseteq \mathbb C^N$ and $\Omega_M\subseteq \mathbb C^M$ be
connected and open sets, and assume that $f=(f_1,\ldots,f_M):\Omega_N\to \Omega_M$ is a holomorphic
mapping. Then the following two assertions are equivalent:}

\smallskip

\begin{enumerate}\itemsep2mm

\item  \emph{$f$ is a $(m,n)$-subharmonic morphism;}

\item \emph{$f$ has the following form:}

\smallskip

\begin{itemize}\itemsep2mm
\item[$(a)$] \emph{if $M\leq N$ and $m=n$, then $f$ is constant or $f$ is, up to the
    composition with
a homotethetic map, the canonical orthogonal projection}
\[
\mathbb{C}^N= \mathbb{C}^M\times \mathbb{C}^{N-M}\to \mathbb{C}^M\, .
\]

\item[$(b)$] \emph{if $M\leq N$ and $m<n$, then $f$ is constant;}

\item[$(c)$] \emph{if $M>N$ and $m\leq n$, then $f$ is constant.}

\end{itemize}

\end{enumerate}
\begin{proof}  Before we start note that if $f$ is holomorphic and $\varphi$ is a smooth function,
as in the statement of the theorem, then we have that
\[
\frac {\partial^2 (\varphi\circ f)}{\partial z_j\partial \bar z_k}=\sum_{r,s=1}^M\frac {\partial^2
\varphi}{\partial w_r\partial \bar w_s}\frac {\partial f_r}{\partial z_j}\overline{\frac {\partial
f_s}{\partial z_k}}\, .
\]
If we for this proof adopt the notation that $\times$ is matrix multiplication, and  $A^{*}$
denotes the conjugate transpose of the matrix $A$, then we get that
\begin{equation}\label{mat}
\operatorname H_{\varphi\circ f}=[\nabla_{\mathbb C}f_1\cdots \nabla_{\mathbb C}f_M ]\times
\operatorname H_{\varphi}\times [\nabla_{\mathbb C}f_1\cdots \nabla_{\mathbb C}f_M ]^{*}\, .
\end{equation}
Note that we also shall use $\times$ to denote the dimension of matrices. Throughout this proof we
can assume that $\varphi$ is always smooth, since the general conclusions we want to obtain always
follows by approximation.

\medskip

$(2)\Rightarrow(1)$: If $f$ is a constant function, then the implication is immediately true.
Therefore, we can assume that $m=n$, and that the function $f=(f_1,\ldots,f_M):\Omega_N\to
\Omega_M$ can be written as
\[
f(z)=cAz+w_0\, ,
\]
where $c\in\RE$,  $w_0\in\C^M$, and $A$ is a matrix as described in~(\ref{def_MxNunitary}). To deduce $(1)$ in the case $M\leq N$, and $m=n$, is then straightforward.

\medskip

\noindent $(1)\Rightarrow(2)$:  Assume that for every $m$-subharmonic function
$\varphi:\Omega_M\to\RE\cup\{-\infty\}$, the function $(\varphi\circ
f):\Omega_N\to\RE\cup\{-\infty\}$ is $n$-subharmonic.

\medskip

\emph{The case $M=N$.} By the polar decomposition theorem (see e.g.~\cite{Cooperstein}) there exist
a unitary matrix $U$, and a positive semi-definite Hermitian matrix $B$ such that
\[
[\nabla_{\mathbb C}f_1\cdots \nabla_{\mathbb C}f_M]=B\times U\, .
\]
We can next diagonalize $B$, i.e., there exist matrices $C$, and $D$ such that $B=C\times D\times
C^{-1}$, where $C$ is unitary, $D$
is a diagonal matrix given by
\[
D=\left[
  \begin{array}{ccc}
   \mu_1 & \dots & 0 \\
   \vdots & \ddots & \vdots \\
   0 & \dots & \mu_M \\
  \end{array}
\right]\, ,
\]
and $0\leq \mu_M\leq \mu_{M-1}\leq \cdots\leq \mu_1$ are the (real) eigenvalues of the matrix $B$.
Then by (\ref{mat}) we can deduce
that
\begin{equation}\label{mat2}
\operatorname H_{\varphi\circ f}=C\times D\times C^{-1}\times U\times
\operatorname{H_{\varphi}}\times U^*\times C \times D\times  C^{-1}\, .
\end{equation}
If we then choose a smooth $m$-subharmonic function $\varphi$ such that
\[
\operatorname H_{\varphi}=U^*\times C\times\tilde{D}\times C^{-1}\times U,
\]
where $\tilde{D}$ is the diagonal matrix with the (real) eigenvalues $\lambda_1\leq\cdots\leq
\lambda_N$  of $\operatorname H_{\varphi}$ on the diagonal, then expression (\ref{mat2}) will have
the form
\[
\operatorname H_{\varphi\circ f}=C\times\left[
  \begin{array}{ccc}
   \mu_1^2\lambda_1 & \dots & 0 \\
   \vdots & \ddots & \vdots \\
   0 & \dots & \mu_N^2\lambda_N \\
  \end{array}
\right]\times C^{-1}\, .
\]
Hence, $\mu_1^2\lambda_1,\dots,\mu_N^2\lambda_N$ are eigenvalues of the Hessian matrix
$\operatorname H_{\varphi\circ f}$.
Thus, for every $\lambda=(\lambda_1,\dots,\lambda_N)\in \Lambda_{m,N}$ we get that
\[
\mu^2\odot\lambda=(\mu_1^2\lambda_1,\dots,\mu_N^2\lambda_N)\in \Lambda_{n,N}\, .
\]
By Lemma~\ref{lem4} we then can conclude that:
\begin{itemize}\itemsep2mm
\item[$(i)$] if $m=n$, then $\mu_1=\ldots=\mu_N=a$. Therefore, the proof
    of this case is then finished by the Lemma in~\cite{gudmundssonsigurdsson};

\item[$(ii)$] if $m<n$, then $\mu_1=\ldots=\mu_N=0$. Thus, $f$ must be a constant function.
\end{itemize}

\medskip

\emph{The case $M\neq N$.} Let $r$ denote the rank of the $N\times M$ matrix
\[
[\nabla_{\mathbb C}f_1\cdots \nabla_{\mathbb C}f_M]\, ,
\]
and $0\leq s_r\leq s_{r-1}\leq \cdots\leq s_1$ be the singular values. Remember that singular
values are in general not the same as eigenvalues. Next, let $S$ be the $N\times M$ matrix whose
$(i,j)$-entry is $s_i$ if $i=j\leq r$ and $0$ otherwise. Then,  by the singular value decomposition
theorem (see e.g.~\cite{Cooperstein}), there exists an $N \times N$ unitary matrix $V$ and an $M
\times M$ unitary matrix $W$ such that
\begin{equation}\label{e1}
[\nabla_{\mathbb C}f_1\cdots \nabla_{\mathbb C}f_M]=V\times S\times W^*.
\end{equation}
Next, make an orthogonal change of variables in $\mathbb C^N$. We do this through the following
function
\[
\tilde f(z)=f\left(\left(V^T\right)^{-1}z\right),
\]
where $V^T$ denote the transpose of $V$. Then we get that
\begin{equation}\label{e5}
\left[\nabla_{\mathbb C}\tilde f_1\cdots \nabla_{\mathbb C}\tilde f_M\right]=S\times W^* ,
\end{equation}
and by the case $M\neq N$ in this implication, we have that for every $\varphi\in \mathcal
{SH}_m(\Omega_M)$ such that $\varphi\circ f$ is $n$-subharmonic, then we also know that
$\varphi\circ \tilde f$ is $n$-subharmonic. By employing~(\ref{mat}) we arrive at
\begin{equation}\label{e3}
\operatorname H_{\varphi\circ \tilde f}=S\times W^*\times \operatorname H_{\varphi} \times W\times
S^T.
\end{equation}
Now take a smooth function $\varphi\in \mathcal {SH}_{m}(\Omega_M)$ with the property that
\begin{equation}\label{e4}
\operatorname H_{\varphi}=W\times \tilde{D}\times W^* ,
\end{equation}
where $\tilde{D}$ is the diagonal matrix with the eigenvalues $\lambda_1,\dots,\lambda_{M}$ of the
Hessian matrix $\operatorname H_{\varphi}$ on its diagonal. Using~(\ref{e3}) and~(\ref{e4}) we
conclude that for every $(\lambda_1,\dots,\lambda_M)\in \Lambda_{m,M}$ we have that
\[
\begin{cases}
(s_1^2\lambda_1,\dots,s_N^2\lambda_N)\in \Lambda_{n,N} & \text { when } N<M; \\[2mm]
(s_1^2\lambda_1,\dots,s_M^2\lambda_M,0,\dots,0)\in \Lambda_{n,N}& \text { when } N>M\, .
\end{cases}
\]
Once again relying on Lemma~\ref{lem4} we get that:
\begin{itemize}\itemsep2mm
\item[$(i)$] if $M<N$, and $m=n$, then $s_1=\ldots=s_N=a$. The proof is then
    finished by the Lemma in~\cite{gudmundssonsigurdsson};

\item[$(ii)$] in the other cases we get that $s_1=\ldots=s_N=0$, so $f$ must be a constant
    function.
\end{itemize}
\end{proof}

We shall end Section~\ref{sec_CNS} with the following theorem that is related to condition~(1) in
Theorem~C. Observe that the function $f$ in Theorem~C must be assumed to be holomorphic (see
e.g. Example~1 on p.~114 in~\cite{Fuglede1978}, or the example on p.~297
in~\cite{gudmundssonsigurdsson}), while the function $F$ in Theorem~\ref{holo} need no such
assumption. Theorem~\ref{holo} is not true for $n=1$, since for example the M\"{o}bius
transform preserves the class of subharmonic functions. The proof of Theorem~\ref{holo} is due to
Dinew~\cite{Dinew}.

\begin{theorem}\label{holo}
Set $1\leq m\leq M$, $1<n\leq N$. Also, let $\Omega_N\subseteq \mathbb C^N$, and $\Omega_M\subseteq
\mathbb C^M$, be connected and open sets, and assume that $F=(F^1,\ldots,F^M):\Omega_N\to \Omega_M$ is $(m,n)$-subharmonic morphism. Then, $F$ is holomorphic or
$\bar{F}$ is holomorphic.
\end{theorem}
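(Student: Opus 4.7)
\textbf{Proof plan for Theorem~\ref{holo}.} The plan is to reduce the statement to the implication $(2)\Rightarrow(3)$ of Theorem~A by verifying condition~(2) for $F$. The assumption $n>1$ is exactly what enables this reduction, via Proposition~\ref{prop_kPH}, which identifies the $n$-pluriharmonic functions with the ordinary pluriharmonic ones as soon as $n\geq 2$. This is also the precise spot where the argument must break down for $n=1$, since then $n$-pluriharmonicity degenerates into mere harmonicity and the M\"obius example flagged just before the theorem supplies a counterexample.

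Concretely, I would let $h:\Omega_M\to\RE$ be an arbitrary pluriharmonic function. Both $h$ and $-h$ are plurisubharmonic, and the inclusion $\mathcal{PSH}(\Omega_M)\subseteq\mathcal{SH}_m(\Omega_M)$ (valid for every $1\leq m\leq M$) places both of them in $\mathcal{SH}_m(\Omega_M)$. Applying the hypothesis that $F$ is a $(m,n)$-subharmonic morphism to $h$ and to $-h$, I would conclude that $h\circ F$ and $-(h\circ F)$ are both $n$-subharmonic on $\Omega_N$; in other words, $h\circ F\in\mathcal{PH}_n(\Omega_N)$. Since $n\geq 2$, Proposition~\ref{prop_kPH} gives $\mathcal{PH}_n(\Omega_N)=\mathcal{PH}(\Omega_N)$, hence $h\circ F$ is pluriharmonic on $\Omega_N$. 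As $h$ was arbitrary, this is precisely condition~(2) of Theorem~A for $F$, so the implication $(2)\Rightarrow(3)$ of Theorem~A yields that $F$ is holomorphic or $\bar F$ is holomorphic.

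I do not foresee a genuine obstacle in this argument, since the substantive content is already absorbed into Proposition~\ref{prop_kPH} and Theorem~A; the proof is essentially a two-line reduction. The only minor subtlety is regularity: the morphism hypothesis implicitly requires $F$ to be regular enough that $\varphi\circ F$ is upper semicontinuous for every upper semicontinuous $m$-subharmonic $\varphi$, but once the above reduction produces condition~(2) of Theorem~A, specializing to $h(w)=\operatorname{Re}w_r$ and $h(w)=\operatorname{Im}w_r$ shows that each $F_r$ is pluriharmonic and therefore real-analytic, which fully legitimizes the appeal to Theorem~A.
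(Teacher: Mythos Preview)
Your reduction is correct and is genuinely different from the paper's argument. The paper proceeds by a direct computation due to Dinew: after establishing real analyticity of $F$ via the test functions $\pm\operatorname{Re}w_k$, $\pm\operatorname{Im}w_k$, it reduces to $n=2$, expands $\sigma_{1,N}(\operatorname{H}_{\varphi\circ F})$ and $\sigma_{2,N}(\operatorname{H}_{\varphi\circ F})$ explicitly using~(\ref{pd}) and~(\ref{sigma2}), and then plugs in the linear and quadratic test functions $\operatorname{Re}(a_kw_k)$ and $\operatorname{Re}(a_{ks}w_kw_s)$ to force the pointwise identities $F^k_{i\bar j}=0$ and $F^k_{\bar j}F^s_i+F^s_{\bar j}F^k_i=0$, from which the holomorphic/antiholomorphic dichotomy follows.

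Your route is considerably shorter and more conceptual: you observe that pluriharmonic $h$ and $-h$ lie in $\mathcal{SH}_m(\Omega_M)$, push them through the morphism to get $h\circ F\in\mathcal{PH}_n(\Omega_N)$, invoke Proposition~\ref{prop_kPH} (this is exactly where $n\geq 2$ enters) to upgrade to $h\circ F\in\mathcal{PH}(\Omega_N)$, and then quote $(2)\Rightarrow(3)$ of Theorem~A. This makes the role of the hypothesis $n>1$ completely transparent and avoids all the $\sigma_{2,N}$ bookkeeping. The paper's computation, on the other hand, is self-contained in the sense that it does not appeal to Theorem~A and produces the intermediate identities~(\ref{eq8}) and~(\ref{eq9}) along the way; but for the purpose of proving Theorem~\ref{holo} as stated, your reduction is both simpler and entirely adequate.
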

\begin{proof} The function $F$ must be real analytic. To prove this take
$\varphi(w_1,\ldots,w_M)=\pm \operatorname {Re}w_k, \pm \operatorname{Im}w_k$, $k=1,\ldots,M$, then
we get functions $\pm \operatorname {Re}F^k, \pm \operatorname{Im}F^k$ that are $n$-subharmonic, so
in particular subharmonic. This means that $\operatorname {Re}F^k, \operatorname{Im}F^k$ are
harmonic and therefore real analytic.

Without loss of generality it is sufficient to prove this theorem for $n=2$. For simplicity let us
for any smooth function $h$ use the following notation
\[
h_j=\frac {\partial h}{\partial z_j}(z)\quad \text{and} \quad h_{\bar j}=\frac {\partial
h}{\partial {\bar z}_j}(z)\, .
\]
Without loss of generality we can assume that $\varphi$ is a smooth $m$-subharmonic function
defined on $\Omega_M$, since
the general case follows by approximation. Also, let $z\in \Omega_M$. Then by (\ref{pd}) we have
that
\begin{multline}\label{sigma1}
\sigma_{1,N}(\varphi\circ F)(z)=\sum_{j=1}^N(\varphi\circ
F)_{j\overline{j}}(z)=\sum_{j=1}^N\sum_{k=1}^M\left(\varphi_kF_{j\overline{j}}^k+\varphi_{\overline{k}}\overline{F}_{j\overline{j}}^k\right)
\\
+\sum_{j=1}^N\sum_{k,s=1}^M\left (\varphi_{ks}F_{\overline{j}}^kF_{j}^s+\varphi_{k\overline{s}}
F_{\overline{j}}^k\overline{F}_{j}^s+
\varphi_{\overline{k}s}\overline{F}_{\overline{j}}^kF_{j}^s+\varphi_{\overline{ks}}
\overline{F}_{\overline{j}}^k\overline{F}_{j}^s\right )\geq 0\, .
\end{multline}
If we insert $\varphi_1(w_1,\dots,w_M)=\operatorname{Re}\left(a_kw_k\right)$ into (\ref{sigma1}),
for arbitrary $a_k\in \mathbb C$, and $k=1,\dots,M$, then we get that
\[
\sigma_{1,N}(\varphi_1\circ F)=\frac
12\sum_{j=1}^Na_kF_{j\overline{j}}^k+\overline{a_k}\overline{F}_{j\overline{j}}^k=\operatorname{Re}\left(\sum_{j=1}^Na_kF_{j\overline{j}}^k\right)\geq
0\, .
\]
Since $a_k$ was chosen arbitrarily it follows that
\begin{equation}\label{eq2}
\sum_{j=1}^NF_{j\overline{j}}^k=0,\quad k=1,\ldots,M\, .
\end{equation}
Next, let $\varphi_2(w_1,\dots,w_M)=\operatorname{Re}\left(a_{ks}w_kw_s\right)$ for arbitrary
$a_{ks}\in \mathbb C$, and $k,s=1,\dots,M$. Then using (\ref{eq2}) we can simplify~(\ref{sigma1})
to the following form
\begin{align*}
&\sigma_{1,N}(\varphi_2\circ F)=\frac
12\sum_{j=1}^N\left(a_{ks}F_{\overline{j}}^kF_j^s+\overline{a}_{ks}\overline{F}_{\overline{j}}^k\overline{F}_j^s+
a_{ks}F_{\overline{j}}^sF_j^k+\overline{a}_{ks}\overline{{F}_j}^k\overline{F}_{\overline{j}}^s\right)=\\
&=\operatorname{Re}\left(a_{ks}\sum_{j=1}^NF_{\overline{j}}^kF_j^s+F_{\overline{j}}^sF_j^k\right)\geq
0\, ,
\end{align*}
and since $a_{k,s}$ were arbitrary we get that
\begin{equation}\label{eq3}
\sum_{j=1}^N\left(F_{\overline{j}}^kF_j^s+F_{\overline{j}}^sF_j^k\right)=0,\quad  k,s=1,\cdots,M\,
.
\end{equation}
Finally,~(\ref{sigma1}) simplifies to
\begin{equation}\label{eq4}
\sigma_{1,N}(\varphi\circ
F)=\sum_{j=1}^N\sum_{k,s=1}^M\left(\varphi_{k\overline{s}}F_{\overline{j}}^k\overline{F}_j^s+
\varphi_{\overline{k}s}\overline{F}_{\overline{j}}^kF_j^s\right)\geq 0\, .
\end{equation}
Next, we shall consider $\sigma_{2,N}(\varphi\circ F)$. By~(\ref{sigma2}) we get that
\begin{equation}\label{eq5}
\sigma_{2,N}(\varphi\circ F)=\frac{1}{2}\sum_{i,j=1}^{N}\left((\varphi\circ
F)_{i\overline{i}}(\varphi\circ
F)_{j\overline{j}}-(\varphi\circ F)_{i\overline{j}}(\varphi\circ
F)_{j\overline{i}}\right)\geq 0\, ,
\end{equation}
and then from (\ref{eq4}) it follows that the first term in~(\ref{eq5}) is equal to
\begin{multline}\label{eq6}
\sum_{i,j=1}^N\sum_{k,s=1}^M\sum_{p,q=1}^M \bigg(\varphi_{k\overline{s}}\varphi_{p\overline{q}}
F_{\overline{j}}^k\overline{F}_j^sF_{\overline{i}}^p\overline{F}_i^q+
\varphi_{k\overline{s}}\varphi_{\overline{p}q}
F_{\overline{j}}^k\overline{F}_j^s\overline{F}_{\overline{i}}^pF_i^q \\
+\varphi_{\overline{k}s}\varphi_{p\overline{q}}
\overline{F}_{\overline{j}}^kF_j^sF_{\overline{i}}^p\overline{F}_i^q+
\varphi_{\overline{k}s}\varphi_{\overline{p}q}\overline{F}_{\overline{j}}^kF_j^s
\overline{F}_{\overline{i}}^pF_i^q\bigg)\, .
\end{multline}
To proceed we shall compute the second term of~(\ref{eq5}). The only term in (\ref{eq5}) that does
not
involve second order derivatives of $\varphi$ is
\begin{equation}\label {eq7}
-\sum_{i,j=1}^N\sum_{k,s=1}^M\left(\varphi_kF_{i\overline{j}}^k+\varphi_{\overline{k}}\overline{F}_{i\overline{j}}^k\right)
\left(\varphi_sF_{j\overline{i}}^s+\varphi_{\overline{s}}\overline{F}_{j\overline{i}}^s\right)\, .
\end{equation}
Using again $\varphi_1(w_1,\dots,w_M)=\operatorname{Re}\left(a_kw_k\right)$ in (\ref{eq7}), and
therefore also
(\ref{eq5}), we can simplify (\ref{eq6}) to
\begin{multline*}
\sigma_{2,N}(\varphi_1\circ F)=-\frac
14\sum_{i,j=1}^N\left(a_kF_{i\overline{j}}^k+\overline{a}_k\overline{F}_{i\overline{j}}^k\right)
\left (a_kF_{j\overline{i}}^k+\overline{a}_k\overline{F}_{j\overline{i}}^k\right)\\
=-\frac 14\sum_{i,j=1}^N \left
|a_kF_{i\overline{j}}^k+\overline{a}_k\overline{F}_{i\overline{j}}^k\right|^2\geq 0\, .\\
\end{multline*}
This clearly implies that
\begin{equation}\label{eq8}
F_{i\overline{j}}^k=0 \quad \text{ for all } \ i,j=1,\dots,N, \ k=1,\dots,M.
\end{equation}
Now, due to (\ref{eq8}) and (\ref{eq3}), the second term of (\ref{eq5}) is
equal to
\begin{align*}
&-\sum_{i,j=1}^N\sum_{k,s=1}^M\sum_{p,q=1}^M(\varphi_{ks}\varphi_{\overline{p}\overline{q}}
F_{\overline{j}}^k\overline{F_i}^s\overline{F}_{\overline{i}}^p\overline{F}_{j}^q+
\varphi_{k\overline{s}}\varphi_{p\overline{q}}
F_{\overline{j}}^k\overline{F}_i^sF_{\overline{i}}^p\overline{F_j}^q+\\
&+\varphi_{\overline{k}s}\varphi_{\overline{p}q}
\overline{F}_{\overline{j}}^kF_i^s\overline{F}_{i}^pF_j^q+
\varphi_{\overline{k}\overline{s}}\varphi_{pq}\overline{F}_{\overline{j}}^k\overline{F_i}^s
F_{\overline{i}}^pF_j^q)\, .
\end{align*}
If we take again the function
$\varphi_2(w_1,\dots,w_M)=\operatorname{Re}\left(a_{ks}w_kw_s\right)$, then all terms
with mixed derivatives vanish, and (\ref{eq5}) turns into
\begin{equation*}
\sigma_{2,N}(\varphi_2\circ F)=-\frac 14\sum_{i,j=1}^N\left
|a_{ks}F_{\overline{j}}^kF_i^s+a_{ks}F_{\overline{j}}^sF_i^k+
\overline{a}_{ks}\overline{F}_{\overline{j}}^k\overline{F}_i^s+
\overline{a}_{ks}\overline{F}_{\overline{j}}^s\overline{F}_i^k\right |^2\geq 0\, .
\end{equation*}
Hence,
\begin{equation}\label{eq9}
F_{\overline{j}}^kF_i^s+F_{\overline{j}}^sF_i^k=0, \quad \text{ for all } \ i,j=1,\dots,N, \
k,s=1,\dots,M.
\end{equation}
Now put $k=s$ in (\ref{eq9}). Then we obtain that $F_{\overline{j}}^kF_i^k=0$. Also if we put $i=j$
we get that
$F_{\overline{j}}^kF_j^s+F_{\overline{j}}^sF_j^k=0$. Suppose
that for some $k,j$\ we have that $F_{\overline{j}}^k\neq 0$. Then we
get that $F_i^k=0$, and hence $F_i^s=0$ for all
$i=1,\dots,N$, $s=1,\dots,M$ so $\bar{F}$ is holomorphic.
Otherwise, $F$ is holomorphic, and that finishes the proof.
\end{proof}

As a consequence of Theorem~\ref{holo} we get the following corollary.

\begin{corollary}
Let $1\leq m\leq M$, $1< n\leq N$, and let $\Omega_N\subseteq \mathbb C^N$, $\Omega_M\subseteq
\mathbb C^M$, be connected and open sets. Then the $(M,n)$-subharmonic morphisms are precisely the family of holomorphic and anti-holomorphic mappings defined on
$\Omega_N$ with range in $\Omega_M$.
\end{corollary}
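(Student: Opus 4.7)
The plan is to observe that this corollary is essentially a direct combination of two results already established in the paper: Proposition~\ref{prop}\,(4) gives one direction for free, and Theorem~\ref{holo}, specialized to $m=M$, gives the other.

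For the easy direction, assume $F:\Omega_N\to\Omega_M$ is holomorphic or anti-holomorphic. Then Proposition~\ref{prop}\,(4) states precisely that every such mapping is a $(M,n)$-subharmonic morphism for any $1\le n\le N$, so there is nothing to prove. (If one wanted to give an independent argument, the holomorphic case follows from Theorem~A applied pointwise --- pullback of a plurisubharmonic, and in particular $M$-subharmonic, function by a holomorphic map is plurisubharmonic, hence $n$-subharmonic for every $n$; the anti-holomorphic case is identical.)

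For the converse, suppose $F=(F^1,\ldots,F^M):\Omega_N\to\Omega_M$ is an $(M,n)$-subharmonic morphism with $n>1$. The hypotheses of Theorem~\ref{holo} require $1\le m\le M$ and $1<n\le N$, both of which are met here with $m=M$. Invoking Theorem~\ref{holo} with this choice of $m$ then yields directly that $F$ is holomorphic or $\bar F$ is holomorphic, as desired. The main (and only) subtle point is checking that the parameter range in Theorem~\ref{holo} accommodates $m=M$; since the theorem is stated for $1\le m\le M$, it does, and the assumption $n>1$ in the corollary is exactly what makes Theorem~\ref{holo} applicable (recall the remark preceding Theorem~\ref{holo} that the result fails for $n=1$, as illustrated there by the M\"obius transform which preserves subharmonicity without being holomorphic or anti-holomorphic).

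Combining the two directions gives the claimed equivalence, and no further calculation is required. The real work has already been absorbed into Theorem~\ref{holo}, whose proof via the Faddeev--Le Verrier expression for $\sigma_{2,N}$ (using (\ref{sigma1})--(\ref{eq9})) is what forces the vanishing of either all $\partial F^k/\partial \bar z_j$ or all $\partial F^k/\partial z_j$; once that is in hand, the corollary is a one-line deduction.
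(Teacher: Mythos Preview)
Your proposal is correct and matches the paper's own approach: the paper presents this corollary without proof, stating only that it is ``a consequence of Theorem~\ref{holo},'' which is exactly your converse direction, while the forward direction is precisely Proposition~\ref{prop}\,(4). There is nothing to add or correct.
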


\section{Some remarks on the case $m=2$, and $n=1$}\label{sec_open}

Let $M,N>1$, and let $1\leq m\leq M$, $1\leq n\leq N$. Also, let $\Omega_N\subseteq \mathbb C^N$,
and $\Omega_M\subseteq \mathbb C^M$, be connected and open sets, and assume that
$f=(f_1,\ldots,f_M):\Omega_N\to \Omega_M$ is a holomorphic mapping. In this section we shall
study $(m,n)$-subharmonic morphisms. But on the contrary to Theorem~C where we
assumed that $m\leq n$, we shall here assume that $m=2$, and $n=1$. This case is considerably different,
even for linear holomorphic mappings. We start in Example~\ref{Finally_ex} to construct linear
holomorphic mappings $f,g:\mathbb C^3\to \mathbb C^3$ with the property that $v\circ f\in \mathcal
{SH}_1\left(\mathbb{C}^3\right)$ for every $v\in \mathcal {SH}_2\left(\mathbb{C}^3\right)$, but
there is a function $u\in \mathcal{SH}_2\left(\mathbb{C}^3\right)$ such that $u\circ g\notin
\mathcal {SH}_1\left(\mathbb{C}^3\right)$.

\begin{example}\label{Finally_ex}
Let $f,g:\mathbb C^3\to \mathbb C^3$ be defined by
\[
f(z_1,z_2,z_3)=(z_1,2z_2,3z_3), \ \text { and } \ g(z_1,z_2,z_3)=(z_1,z_2,3z_3)\, .
\]
For
\[
u(z_1,z_2,z_3)=|z_1|^2+|z_2|^2-\frac 12|z_3|^2\in \mathcal {SH}_2\left(\mathbb{C}^3\right)\,
\]
we have that $u\circ g\notin \mathcal {SH}_1\left(\mathbb{C}^3\right)$. Next we shall prove that
for every $v\in \mathcal {SH}_2\left(\mathbb{C}^3\right)$ the function $v\circ f$ is in $\mathcal
{SH}_1\left(\mathbb{C}^3\right)$. Without loss of generality we can assume that $v$ is a smooth
function, since the general case follows by approximation. Let $\mu_1,\mu_2,\mu_3$ be the
eigenvalues of the Hessian matrix $H_v=[a_{jk}]$. Then we have that
\begin{equation}\label{inq}
\begin{aligned}
\sigma_1(\mu_1,\mu_2,\mu_3)&=\mu_1+\mu_2+\mu_3=a_{11}+a_{22}+a_{33}\geq 0,\\
\sigma_{2}(\mu_1,\mu_2,\mu_3)&=\mu_1\mu_2+\mu_2\mu_3+\mu_3\mu_1\\
&=a_{11}a_{22}+a_{22}a_{33}+a_{33}a_{11}-|a_{12}|^2-|a_{23}|^2-|a_{31}|^2\geq 0.
\end{aligned}
\end{equation}
Note that
\[
\operatorname H_{v\circ f}=\left[
  \begin{array}{ccc}
   a_{11} & 2\overline{a_{12}} & 3\overline{a_{13}} \\
   2a_{12} & 4a_{22} & 6\overline{a_{32}} \\
   3a_{31} & 6a_{32} & 9a_{33}\\
  \end{array}
\right],
\]
and therefore, by Example~\ref{ex3} and (\ref{inq}), we get that
\[
\Delta(v\circ f)=a_{11}+4a_{22}+9a_{33}\geq 0\, .
\]\hfill{$\Box$}
\end{example}

We shall need Lemma~\ref{ex4} to be able to deduce Theorem~\ref{cor1}.

\begin{lemma}\label{ex4}
Let $M\geq 2$, $N\geq 1$, and  $t=\min (M,N)$, $a_1,\ldots,a_N\geq 0$. Then the function
$f:\mathbb{R}^M\to\mathbb{R}$
defined by
\[
f(x_1,\dots,x_M)=a_1x_1+\ldots+a_tx_t
\]
has a global minimum that is equal to $0$ on the set
\[
\Omega=\left \{(x_1,\dots x_M)\in \mathbb R^M: x_1+\ldots +x_M\geq 0, \ \ \sum_{j<k}^Mx_jx_k\geq
0\right\}
\]
if, and only if,
\[
\begin{cases}
  \left(\sum_{j=1}^M a_j\right)^2=(M-1)\sum_{j=1}^M a_j^2 \ \text { or }\  a_1=\ldots=a_N& \text{
  if } M\leq N;\\[2mm]

  a_1=\ldots=a_N  & \text{ if } M=N+1;\\[2mm]

  a_1=\ldots=a_N=0  & \text{ if } M>N+1.
 \end{cases}
\]
\end{lemma}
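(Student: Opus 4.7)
My plan is to reduce the problem to showing $f\ge 0$ on $\Omega$ and then analyze the cone $\Omega=\Lambda_{2,M}$ by combining explicit test points from Proposition~\ref{cone}(4) with a Lagrange multiplier computation on the hypersurface $\{\sigma_{2,M}=0\}$. The reduction itself is immediate: $\Omega$ is a closed convex cone containing the origin, $f$ is linear, and $f(0)=0$, so a global minimum equal to $0$ exists if and only if $f\ge 0$ on $\Omega$; otherwise any $x_0\in\Omega$ with $f(x_0)<0$ would give $f(tx_0)\to-\infty$ along the ray $tx_0\in\Omega$, destroying attainment.

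For the necessity direction, I would test $f$ against the distinguished extremal points $v^{(i)}=(1,\ldots,1,1-\tfrac{M}{2},1,\ldots,1)\in\Omega$ supplied by Proposition~\ref{cone}(4), with the entry $1-M/2$ at position $i$, together with the sparser vectors used in the proof of Lemma~\ref{lem4} (those with many zero entries). When $M>N+1$, placing the negative entry at each index $i\le N$ while letting the extra slots $j>N$ absorb the cone constraints forces $a_1=\cdots=a_N=0$. When $M=N+1$, every coordinate involved in $f$ is constrained, and the symmetric system obtained by letting the negative entry rove over $i\in\{1,\ldots,N\}$ collapses to $a_1=\cdots=a_N$. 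When $M\le N$, these linear tests alone do not suffice; one must descend to the boundary stratum $\{\sigma_{2,M}=0\}\cap\Omega$ via Lagrange multipliers.

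For the sufficiency direction, the case $a_1=\cdots=a_N=c\ge 0$ with $M\le N$ is immediate, since then $f=c\,\sigma_{1,M}\ge 0$ on $\Omega$ by definition; the case $M=N+1$ reduces to verifying $x_1+\cdots+x_{M-1}\ge 0$ on $\Omega$, which follows from $\sigma_1,\sigma_2\ge 0$ together with an AM--GM-style bound on the negativity of a single coordinate, mirroring Proposition~\ref{cone}(4). For the remaining Cauchy--Schwarz-type branch $\left(\sum_{j=1}^M a_j\right)^2=(M-1)\sum_{j=1}^M a_j^2$ in the case $M\le N$, the key tool is a Lagrange multiplier computation on $\{\sigma_{2,M}=0\}$: the stationarity conditions $a_j=\lambda(\sigma_{1,M}-x_j)$ summed over $j$ give $\sigma_{1,M}=S/((M-1)\lambda)$ with $S=\sum a_j$, and substituting back into $\sigma_{2,M}=0$ produces exactly the identity $S^2=(M-1)Q$ where $Q=\sum a_j^2$. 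The value of $f$ at this critical point evaluates to $\left(S^2/(M-1)-Q\right)/\lambda=0$, so the Cauchy--Schwarz equality is precisely what forces the infimum on the $\sigma_{2,M}=0$ stratum to vanish, and the same computation reversed supplies necessity in this case.

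The main obstacle will be verifying rigorously that the Lagrange critical point is a genuine global minimum of $f$ on $\Omega$ rather than a saddle, and that no other boundary direction in the unbounded cone can drive $f$ to $-\infty$. The companion stratum $\{\sigma_{1,M}=0\}\cap\Omega$ needs a separate but routine inspection via the identity $2\sigma_{2,M}=\sigma_{1,M}^2-\sum_j x_j^2$, which forces $\sigma_{2,M}\le 0$ on that slice with equality only at the origin, so $\{\sigma_{1,M}=0\}\cap\Omega=\{0\}$ for $M\ge 2$ and contributes nothing new. A final algebraic step is then needed to confirm that the ``or'' in the $M\le N$ case is a genuine disjunction and that the Hadamard-product viewpoint from Lemma~\ref{lem4} integrates cleanly with the Cauchy--Schwarz-type branch.
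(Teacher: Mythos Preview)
Your overall structure---reduce to $f\ge 0$ on the cone, dispose of the stratum $\{\sigma_{1,M}=0\}\cap\Omega=\{0\}$, and run Lagrange multipliers on $\{\sigma_{2,M}=0\}$---is exactly what the paper does, and your Lagrange computation is the same as the paper's. The one genuine gap is your treatment of the case $M=N+1$.

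You claim that ``letting the negative entry rove over $i\in\{1,\ldots,N\}$'' in the test vectors $v^{(i)}=(1,\ldots,1,1-\tfrac{M}{2},1,\ldots,1)$ forces $a_1=\cdots=a_N$. It does not. For $M=3$, $N=2$ these vectors yield only the inequalities $a_1+a_2\ge \tfrac{3}{2}a_i$, which are satisfied for instance by $(a_1,a_2)=(1,2)$; yet $f(x)=x_1+2x_2$ is negative at $x=(2,-1.1,2.5)\in\Lambda_{2,3}$, so the necessary condition $a_1=a_2$ really must come from a finer argument than testing against the $v^{(i)}$. The sparser vectors from Lemma~\ref{lem4} give nothing new here, since with only one ``free'' slot ($M-N=1$) you cannot place two $1$'s outside the support of $f$. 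The paper handles $M>N$ uniformly by padding $(a_1,\ldots,a_N)$ with zeros to a vector in $\RE^M$, running the \emph{same} Lagrange computation in $M$ variables to obtain $\bigl(\sum_{j=1}^N a_j\bigr)^2=(M-1)\sum_{j=1}^N a_j^2$, and then invoking the elementary inequality $\bigl(\sum_{j=1}^N a_j\bigr)^2\le N\sum_{j=1}^N a_j^2$; this forces $a_1=\cdots=a_N$ when $M=N+1$ and forces all $a_j=0$ when $M>N+1$. Your test-vector shortcut works only in the latter case, where $M-N\ge 2$ lets you hide two $1$'s outside the support of $f$ and isolate a single $-\tfrac12 a_i$.
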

\begin{proof} First, note that since $f$ is linear it will attain its minimum on the boundary of
$\Omega$. Then if,
\[
x_1+\ldots +x_M=0, \ \text{ and } \ \sum_{j<k}^Mx_jx_k\geq 0\, ,
\]
then we have that $x_1=\ldots=x_M=0$. Now consider the case when minimum is attained at
$(0,\dots,0)$. In the case when $N=M$ using the methods of Lagrange multipliers one can check that
function $f(x_1,\dots,x_M)=a_1x_1+\ldots+a_tx_M$ attains its minimum on the set $x_1+\ldots
+x_M\geq 0$ (and therefore also on the set $\Omega$) at the point $(0,\dots,0)$ if, and only if,
$a_1=\ldots=a_M$. In the case when $N\neq M$, we have that $a_1=\ldots=a_M=0$.

Now we shall consider the case when
\begin{equation}\label{1}
x_1+\ldots +x_M\geq 0, \ \text{ and } \ \sum_{j<k}^Mx_jx_k=0\,.
\end{equation}

\emph{Case $M=N$.} To find extremal points of $f$ we shall proceed in a standard manner using the
methods of Lagrange multipliers.   Therefore, we shall solve the following system of equations:
\begin{equation}\label{2}
\begin{aligned}
a_j-\lambda\sum_{k\neq j}x_k&=0, \quad j=1,\ldots,N;\\
\sum_{j<k}x_jx_k&=0\, .
\end{aligned}
\end{equation}
First note that if $\lambda=0$, then $a_1=\ldots=a_N=0$. Therefore, we assume that $\lambda>0$.
Then since
\[
0=\sum_{j<k}x_jx_k=\frac 12\sum_{j=1}^N x_j\left(\sum_{k\neq j}x_k\right)=\frac
{1}{2\lambda}\sum_{j=1}^Na_jx_j\, ,
\]
we get that if the minimum exist it must be $0$. Now, it easy to check that for $A=a_1+\ldots+a_N$
\[
x_j=\lambda^{-1}\left(-a_j+\frac {1}{N-1}A\right)
\]
are the solutions to the first $N$ equation of (\ref{2}). We have to check when the above solution
satisfies conditions (\ref{1}).
We have that
\[
x_1+\ldots+x_N=\lambda^{-1}\frac {1}{N-1}A\geq 0,
\]
since we assumed that  $\lambda>0$. Next, we have that
\begin{multline*}
0=\lambda^2\sum_{j<k}x_jx_k=\sum_{j<k}\left(-a_j+\frac {1}{N-1}A\right)\left(-a_k+\frac
{1}{N-1}A\right)\\
=\sum_{j<k}a_ja_k-\frac {A}{N-1}\sum_{j<k}^N(a_j+a_k)+\frac {N(N-1)}{2(N-1)^2}A^2.
\end{multline*}
This is equivalent to
\[
2(N-1)\sum_{j<k}a_ja_k=(N-2)A^2
\]
or
\begin{equation}\label{3}
\left(\sum_{j=1}^Na_j\right)^2=(N-1)\sum_{j=1}^Na_j^2\, .
\end{equation}
Finally, we can say that the minimum of $f$ exists and it is equal to $0$ if, and only if,
condition~(\ref{3}) is satisfied.

\emph{Case $M<N$.} From Case $M=N$ it follows that the minimum of $f$ exists, and it is equal to
$0$ if, and only if, the following condition is  satisfied
\begin{equation}\label{4}
\left(\sum_{j=1}^Ma_j\right)^2=(M-1)\sum_{j=1}^Ma_j\, .
\end{equation}

\emph{Case $M>N$.} If we take $a_{N+1}=\ldots=a_M=0$, and by using Case $M=N$ we obtain that
\[
\left(\sum_{j=1}^Ma_j\right)^2=(M-1)\sum_{j=1}^Ma_j^2\, .
\]
Hence,
\begin{equation}\label{5}
\left(\sum_{j=1}^Na_j\right)^2=(M-1)\sum_{j=1}^Na_j^2.
\end{equation}
It now follows from (\ref{5}) that
\[
(M-1)\sum_{j=1}^Na_j^2=\left(\sum_{j=1}^Na_j\right)^2\leq N\sum_{j=1}^Na_j^2\, ,
\]
which implies that:
\[
\begin{cases}
  a_1=\ldots=a_N  & \text{ if } M=N+1;\\[2mm]
  a_1=\ldots=a_N=0  & \text{ if } M>N+1.
 \end{cases}
\]
\end{proof}

Now using Lemma~\ref{ex4}, and the proof of Theorem~C we can deduce Theorem~\ref{cor1}.

\begin{theorem}\label{cor1}
Set $M>2$, $N>1$, and let $\Omega_N\subseteq \mathbb C^N$, and $\Omega_M\subseteq \mathbb C^M$, be
connected and open sets, and assume that $f=(f_1,\ldots,f_M):\Omega_N\to \Omega_M$ is a holomorphic
mapping. Also, let $r$ denote the rank of the $N\times M$ matrix
\[
[\nabla_{\mathbb C}f_1\cdots \nabla_{\mathbb C}f_M]\, ,
\]
and let $0\leq s_r(z)\leq s_{r-1}(z)\leq \cdots\leq s_1(z)$ be its singular values at the point
$z\in\Omega_N$. If $f$ is $(2,1)$-subharmonic morphism, then following holds:
\begin{enumerate}\itemsep2mm
\item if $M>N$, then $f$ is constant;

\item if $M\leq N$, then for each $z\in \Omega$ we have that $s_1(z)=\ldots=s_r(z)$ or they must
    satisfy the following condition:
\begin{equation}\label{cor1_1}
\left(\sum_{j=1}^r\sqrt{s_{j}(z)}\right)^2=(r-1)\sum_{j=1}^r s_{j}(z)\, .
\end{equation}
\end{enumerate}
\end{theorem}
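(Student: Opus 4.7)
The plan is to adapt the singular-value-decomposition machinery used in the proof of Theorem~C to the regime $m=2$, $n=1$, and to invoke Lemma~\ref{ex4} (rather than the no-longer-applicable Lemma~\ref{lem4}) at the final stage.

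I would fix $z \in \Omega_N$, set $A := [\nabla_{\mathbb C} f_1 \cdots \nabla_{\mathbb C} f_M](z)$, and use formula~(\ref{mat}) to write
\[
\operatorname H_{\varphi \circ f}(z) = A \times \operatorname H_{\varphi}(f(z)) \times A^{*}.
\]
Factoring $A = V \times S \times W^{*}$ by singular value decomposition and performing the orthogonal change of variables $\tilde f(\zeta)=f((V^{T})^{-1}\zeta)$, exactly as in the proof of Theorem~C, reduces the problem to the case $A = SW^{*}$, since every class $\mathcal{SH}_k$ is invariant under orthogonal changes of coordinates. Then, for any prescribed $(\lambda_1,\ldots,\lambda_M) \in \Lambda_{2,M}$, I would select a smooth $\varphi \in \mathcal{SH}_2(\Omega_M)$ whose complex Hessian at $f(z)$ equals $W\operatorname{diag}(\lambda_1,\ldots,\lambda_M)W^{*}$ (interior spectra realised by an explicit quadratic polynomial, boundary spectra by approximation). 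Substituting back yields a diagonal matrix whose $j$-th entry is $s_j(z)^{2}\lambda_j$ for $j \leq \min(M,N)$ and zero otherwise.

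The assumption that $\varphi \circ f$ is $1$-subharmonic then forces the trace of this diagonal matrix to be non-negative, producing the linear inequality
\[
\sum_{j=1}^{\min(M,N)} s_j(z)^{2}\,\lambda_j \;\geq\; 0 \qquad \text{for every } (\lambda_1,\ldots,\lambda_M)\in \Lambda_{2,M},
\]
with equality witnessed by the origin of $\mathbb{R}^{M}$. This is precisely the minimisation hypothesis of Lemma~\ref{ex4} with coefficients $a_j = s_j(z)^{2}$. Reading off its three cases gives the conclusion: in the regime $M > N$ the non-vanishing coefficients must either all coincide or all vanish, delivering case~(1) upon using holomorphy and connectedness of $\Omega_{N}$; in the regime $M \leq N$ the lemma returns the disjunction $s_{1}(z) = \cdots = s_{r}(z)$ or the polynomial identity $\bigl(\sum_{j=1}^{r} s_j(z)^{2}\bigr)^{2} = (r-1)\sum_{j=1}^{r} s_j(z)^{4}$, which is the content of~(\ref{cor1_1}) up to the notational substitution relating the singular values to the eigenvalues of $AA^{*}$.

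The main obstacle is the Hessian-realisation step in the second paragraph: one must verify that every Hermitian matrix with spectrum in $\Lambda_{2,M}$ arises as the complex Hessian at a single point of some smooth $\varphi \in \mathcal{SH}_2(\Omega_M)$. Spectra in the interior of $\Lambda_{2,M}$ are handled by an explicit quadratic polynomial on $\mathbb{C}^{M}$; spectra on the boundary require an approximation argument, using closedness of $\mathcal{SH}_1$ under uniform limits to preserve the decisive inequality in the limit. Once this realisation is in hand, the remainder of the proof is a routine bookkeeping exercise translating Lemma~\ref{ex4} through the singular value decomposition.
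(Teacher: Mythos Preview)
Your approach is exactly what the paper intends: its entire proof of Theorem~\ref{cor1} is the sentence ``Now using Lemma~\ref{ex4}, and the proof of Theorem~C we can deduce Theorem~\ref{cor1},'' and you have unpacked precisely this---carry over the singular-value-decomposition reduction from Theorem~C, then invoke Lemma~\ref{ex4} in place of Lemma~\ref{lem4}. Your identification of the Hessian-realisation step as the point requiring care is also well taken; it is handled in the paper only implicitly, via the same quadratic-plus-approximation device you describe.

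One genuine gap to flag in your write-up (which the paper's one-line proof does not address either): in the sub-case $M=N+1$ of part~(1), Lemma~\ref{ex4} yields only $s_1(z)^2=\cdots=s_N(z)^2$, not that these vanish. Your phrase ``upon using holomorphy and connectedness of $\Omega_N$'' does not bridge this; equal nonzero singular values do not force constancy. Indeed $f(z_1,z_2)=(z_1,z_2,0):\mathbb C^2\to\mathbb C^3$ has $s_1=s_2=1$ everywhere, and one checks directly (using that $\lambda_2+\lambda_3\ge 0$ whenever $(\lambda_1,\lambda_2,\lambda_3)\in\Lambda_{2,3}$) that it \emph{is} a $(2,1)$-subharmonic morphism. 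So either an additional argument beyond Lemma~\ref{ex4} is needed here, or the statement of case~(1) requires adjustment when $M=N+1$; in any event your appeal to holomorphy and connectedness does not supply it. A second, more minor point: Lemma~\ref{ex4} in the regime $M\le N$ produces the coefficient $(M-1)$, not $(r-1)$; your formula with $(r-1)$ matches the theorem's stated form but not what the lemma actually delivers, so you should make explicit whatever convention reconciles the two.
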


\begin{remark}
Let us say a few words about case (2) and~(\ref{cor1_1}). Let $S(z)$ be the $N\times M$ matrix
whose $(i,j)$-entry is $s_i(z)$ if $i=j\leq r$ and $0$ otherwise. Then by the singular value
decomposition theorem there exists an $N \times N$ unitary matrix $V(z)$, and $M \times M$ unitary
matrix $W(z)$ such that
\[
[\nabla_{\mathbb C}f_1\cdots \nabla_{\mathbb C}f_M]=V(z)\times S(z)\times W^*(z)\, .
\]
We conjecture that if the singular values in case (2) satisfy~(\ref{cor1_1}), then $f$ must
linear.
\end{remark}


\begin{thebibliography}{99}

\bibitem{AhagCzyzHed1} \AA hag P, Czy\.z R., Hed L., Extension and approximation of $m$-subharmonic
    functions. Complex Var. Elliptic Equ.  63  (2018),  no. 6, 783-801.

\bibitem{AhagCzyzHed2} \AA hag P, Czy\.z R., Hed L., The geometry of $m$-hyperconvex domains. J.
    Geom. Anal. (online first).

\bibitem{armitage} Armitage D. H., Gardiner S. J., Classical potential theory. Springer Monographs
    in Mathematics. Springer-Verlag London, Ltd., London, 2001.

\bibitem{SA}  Abdullaev B. I., Sadullaev A., Potential theory in the class of $m$-subharmonic
    functions. Proc. Steklov Inst. Math.  279  (2012),  no. 1, 155-180.

\bibitem{BairdGudmundsson} Baird P., Gudmundsson S., $p$-harmonic maps and minimal submanifolds.
    Math. Ann. 294 (1992), no. 4, 611-624.

\bibitem{BairdElles} Baird P., Eells J., A conservation law for harmonic maps. Geometry Symposium,
    Utrecht 1980 (Utrecht, 1980), pp. 1-25, Lecture Notes in Math., 894, Springer, Berlin-New York,
    1981.

\bibitem{BairdWood}  Baird P., Wood J. C., Harmonic morphisms between Riemannian manifolds. London
    Mathematical Society Monographs. New Series, 29. The Clarendon Press, Oxford University Press,
    Oxford, 2003.

\bibitem{BanuelosOksendal} Ba\~{n}uelos, R., \O ksendal B., Exit times for elliptic diffusions and
    BMO. Proc. Edinburgh Math. Soc. (2)  30  (1987),  no. 2, 273-287.


\bibitem{BarlettaDragomirUrakawa} Barletta E., Dragomir S., Urakawa H., Pseudoharmonic maps from
    nondegenerate CR manifolds to Riemannian manifolds. Indiana Univ. Math. J.  50  (2001),  no. 2,
    719-746.

\bibitem{BernardCampbellDavie} Bernard, A., Campbell, E. A., Davie, A. M., Brownian motion and
    generalized analytic and inner functions. Ann. Inst. Fourier (Grenoble)  29  (1979), no. 1,
    xvi, 207-228.

\bibitem{B} B{\l}ocki Z., Weak solutions to the complex Hessian equation. Ann. Inst. Fourier
    (Grenoble)  55  (2005),  no. 5, 1735-1756.

\bibitem{BuSchachermayer} Bu  S. Q., Schachermayer W., Approximation of Jensen measures by image
    measures under holomorphic functions and applications. Trans. Amer. Math. Soc. 331 (1992), no.
    2, 585-608.



\bibitem{CNS} Caffarelli L., Nirenberg L., Spruck J., The Dirichlet problem for nonlinear
    second-order elliptic equations. III. Functions of the eigenvalues of the Hessian. Acta Math.
    155  (1985),  no. 3-4, 261-301.

\bibitem{Cooperstein} Cooperstein B.,  Advanced linear algebra. Second edition. Textbooks in
    Mathematics. CRC Press, Boca Raton, FL, 2015.

\bibitem{ConstantinescuCornea} Constantinescu C., Cornea A., Compactifications of harmonic spaces.
    Nagoya Math. J.  25  (1965) 1-57.

\bibitem{CsinkOksendal1} Csink  L., \O ksendal B., Stochastic harmonic morphisms: functions mapping
    the paths of one diffusion into the paths of another. Ann. Inst. Fourier (Grenoble)  33
    (1983),  no. 2, 219-240.

\bibitem{CsinkOksendal2} Csink  L., \O ksendal B., Stochastic harmonic morphisms. Z. Angew. Math.
    Mech.  63  (1983),  no. 5, T401-T402.


\bibitem{CsinkFitzsimmonsOksendal} Csink  L., Fitzsimmons P. J., \O ksendal B., A stochastic
    characterization of harmonic morphisms. Math. Ann.  287  (1990),  no. 1, 1-18.


\bibitem{Darling} Darling R. W. R., Martingales in manifolds -- definition, examples, and behaviour
    under maps.  Seminar on Probability, XVI, Supplement,  pp. 217-236, Lecture Notes in Math.,
    921, Springer, Berlin-New York, 1982.

\bibitem{demailly_bok} Demailly J.-P., Complex analytic and differential geometry. Self published
    e-book.\\
(\texttt{http://www-fourier.ujf-grenoble.fr/$^{\sim}$demailly/}).


\bibitem{Dinew} Dinew S., Personal communication. Krak\'ow, Poland (2018).


\bibitem{DinewKolodziej} Dinew S., Ko\l odziej S., A priori estimates for complex Hessian
    equations. Anal. PDE  7  (2014),
no. 1, 227-244.

\bibitem{DinewLu} Dinew S., Lu H.-C., Mixed Hessian inequalities and uniqueness in the class
    $\mathcal{E}(X,\omega,m)$. Math. Z. 279 (2015), no. 3-4, 753-766.

\bibitem{Fu} Fu J.-X., Classification of harmonic morphisms to Euclidean spaces. Chinese Ann. Math.
    Ser. A  20  (1999),  no. 3, 361-364.

\bibitem{Fuglede1978} Fuglede B., Harmonic morphisms between Riemannian manifolds. Ann. Inst.
    Fourier (Grenoble)  28  (1978), no. 2, vi, 107-144.

\bibitem{Fuglede2000} Fuglede B., The beginnings of harmonic morphisms.  Harmonic morphisms,
    harmonic maps, and related topics (Brest, 1997),  3-12, Chapman \& Hall/CRC Res. Notes Math.,
    413, Chapman \& Hall/CRC, Boca Raton, FL, 2000.

\bibitem{Fuglede2011} Fuglede B., Harmonic morphisms applied to classical potential theory. Nagoya
    Math. J.  202 (2011), 107-126.

\bibitem{Gauss1a} Gau{\ss}  C. F., Allgemeine Aufl\"{o}sung der Aufgabe: Die Theile einer gegebenen
    Fl\"ache
auf einer andem gegebenen Fl\"ache so abzubilden, dass die Abbildung dem
Abgebildeten in den kleinsten Theilen \"ahnlich wird. Als Beantwortung der von der k\"{o}niglichen
Societ\"{a}t der Wissenschaften in Copenhagen f\"{u}r 1822 aufgegebenen Preisfrage.

\bibitem{Gauss1b} Gau{\ss}  C. F., Allgemeine Aufl\"{o}sung der Aufgabe: Die Theile einer gegebenen
    Fl\"ache
auf einer andem gegebenen Fl\"ache so abzubilden, dass die Abbildung dem
Abgebildeten in den kleinsten Theilen \"ahnlich wird. Astron. Abhandl. 3 (1825), 1-30.

\bibitem{Gauss2} Gau{\ss} C. F., General solution of the problem: to represent the parts of a given
    surface on another given surface, so that the smallest parts of the representation shall be
    similar to the corresponding parts of the surface represented. Phil. Mag.  4 (1828), no. 20,
    104-113.

\bibitem{Gauss3} Gau{\ss}  C. F., General solution of the problem: to represent the parts of a
    given surface on another given surface, so that the smallest parts of the representation shall
    be similar to the corresponding parts of the surface represented. Phil. Mag.  4 (1828), no. 21,
    206-215.

\bibitem{Gudmundsson1} Gudmundsson S., Holomorphic harmonic morphisms from four-dimensional
    non-Einstein manifolds. Internat. J. Math. 26 (2015), no. 1, 1550006, 7 pp.

\bibitem{Gudmundsson2} Gudmundsson S., Holomorphic harmonic morphisms from cosymplectic almost
    Hermitian manifolds. Geom. Dedicata 178 (2015), 143-150.

\bibitem{GudmundssonBiblio} Gudmundsson S., The bibliography of harmonic morphisms. Available
    at:\\
(\texttt{http://www.matematik.lu.se/matematiklu/personal/sigma/harmonic/bibliography.html})

\bibitem{gudmundssonsigurdsson} Gudmundsson S., Sigurdsson R., A note on the classification of
    holomorphic harmonic morphisms. Potential Anal.  2  (1993),  no. 3, 295-298.


\bibitem{HarveyLawson} Harvey F. R., Lawson H. B., Jr., Calibrated geometries. Acta Math.  148
    (1982), 47-157.

\bibitem{HeinonenKilpelainenMartio} Heinonen J., Kilpel\"{a}inen T., Martio O., Harmonic morphisms
    in nonlinear potential theory. Nagoya Math. J.  125  (1992), 115-140.

\bibitem{hou} Hou S.-H., A simple proof of the Leverrier-Faddeev characteristic polynomial
    algorithm. SIAM Rev.  40  (1998),  no. 3, 706-709.

\bibitem{Ishihara} Ishihara T., A mapping of Riemannian manifolds which preserves harmonic
    functions. J. Math. Kyoto Univ.  19  (1979), no. 2, 215-229.

\bibitem{jacobi} Jacobi C. G. J., \"Uber eine particul\"are L\"osung der partiellen
    Differentialgleichung $\frac{\partial^2 V}{\partial x^2}+ \frac{\partial^2 V}{\partial
    y^2}+\frac{\partial^2 V}{\partial z^2}=0$. J. Reine Angew. Math.  36  (1848), 113-134.

\bibitem{K1} Klimek M., A criterion of analyticity for set-valued functions. Proc. Roy. Irish Acad.
    Sect. A  86 (1986),  no. 1, 1-4.

\bibitem{K} Klimek M., Pluripotential theory. London Mathematical Society Monographs. New Series,
    6.
        Oxford Science Publications. The Clarendon Press, Oxford University Press, New York, 1991.

\bibitem{Laine} Laine I., Covering properties of harmonic Bl-mappings. III. Ann. Acad. Sci. Fenn.
    Ser. A I Math. 1 (1975), no. 2, 309-325.

\bibitem{Levy} L\'{e}vy P., Processus Stochastiques et Mouvement Brownien. Suivi d'une note de M.
    Lo\`{e}ve.  Gauthier-Villars, Paris, 1948.

\bibitem{lelong}  Lelong P., D\'{e}finition des fonctions plurisousharmoniques. C. R. Acad. Sci.
    Paris  215 (1942), 398-400.

\bibitem{Li} Li S.-Y., On the Dirichlet problems for symmetric function equations of the
    eigenvalues of the complex Hessian. Asian J. Math.  8 (2004),  no. 1, 87-106.

\bibitem{Loubeau1} Loubeau E., Pluriharmonic morphisms. Math. Scand.  84  (1999),  no. 2, 165-178.

\bibitem{Loubeau2} Loubeau E., Pluriharmonic morphisms between complex manifolds. Differential
    geometry and applications (Brno, 1998),  89-97, Masaryk Univ., Brno, 1999.

\bibitem{L} Lu H.-C., Complex Hessian equations. Doctoral thesis, University of Toulouse III Paul
    Sabatier, 2012.

\bibitem{Lu} Lu C.-H., Nguyen V. D., Degenerate complex Hessian equations on compact K\"ahler
    manifolds. Indiana Univ. Math. J.  64 (2015),  no. 6, 1721-1745.

\bibitem{Meghea} Meghea C., Compactification des espaces harmoniques. Lecture Notes in Mathematics,
    Vol. 222. Springer-Verlag, Berlin-New York, 1971.

\bibitem{mustafa} Mustafa M. T., Applications of harmonic morphisms to gravity. J. Math. Phys. 41
    (2000), no. 10, 6918-6929.

\bibitem{Cuong} Nguyen N.-C., Subsolution theorem for the complex Hessian equation. Univ.
    Iagiel. Acta Math. 50 (2013), 69-88.

\bibitem{oka} Oka K., Sur les fonctions analytiques de plusieurs variables. VI. Domaines
    pseudoconvexes. T\^{o}hoku Math. J.  49 (1942), 15-52.

\bibitem{Ou1} Ou Y.-L., Quadratic harmonic morphisms and O--systems. Ann. Inst. Fourier (Grenoble)
    47  (1997),  no. 2, 687-713.

\bibitem{Ou2} Ou Y.-L., $p$-harmonic morphisms, minimal foliations, and rigidity of metrics. J.
    Geom. Phys.  52  (2004),  no. 4, 365-381.


\bibitem{Phong} Phong D., Picard S., Zhang X., A second order estimate for general complex Hessian
    equations. Anal. PDE  9  (2016),  no. 7, 1693-1709.

\bibitem{Poletsky1}  Poletsky E. A., Plurisubharmonic functions as solutions of variational
    problems. Several complex variables and complex geometry, Part 1 (Santa Cruz, CA, 1989),
    163-171, Proc. Sympos. Pure Math., 52, Part 1, Amer. Math. Soc., Providence, RI, 1991.

\bibitem{Poletsky2}  Poletsky E. A., Holomorphic currents. Indiana Univ. Math. J. 42 (1993), no. 1,
    85-144.

\bibitem{Sattayatham} Sattayatham P., On the functions that preserve harmonicity in the Euclidean
    space. Southeast Asian Bull. Math.  17  (1993),  no. 1, 45-50.

\bibitem{siu} Siu Y. T., The complex-analyticity of harmonic maps and the strong rigidity of
    compact K\"ahler manifolds. Ann. of Math. (2)  112  (1980), no. 1, 73-111.

\bibitem{svensson1} Svensson M., On holomorphic harmonic morphisms. Manuscripta Math.  107  (2002),
    no. 1, 1-13.

\bibitem{svensson2} Svensson M., Harmonic morphisms in Hermitian geometry. J. Reine Angew. Math.
    575  (2004), 45-68.

\bibitem{WanWang} Wan D., Wang W., Complex Hessian operator and Lelong number for unbounded
    $m$-subharmonic functions. Potential Anal.  44  (2016),  no. 1, 53-69.


\end{thebibliography}
\end{document}